\documentclass[reqno]{amsart}
\usepackage{amscd}
\usepackage{amssymb}
\usepackage[dvips]{graphics}
\usepackage[margin=1in,includeheadfoot]{geometry}
\usepackage{url}
\usepackage{hyperref}
\usepackage{cite}

\textwidth 6.2in \textheight 8.5in \oddsidemargin -0.01truein
\topmargin -.2truein

\def\beq{\begin{equation}}
\def\eeq{\end{equation}}
\def\ba{\begin{array}}
\def\ea{\end{array}}

\def\R{\mathbb R}







\newtheorem{thm}{Theorem}[section]
\newtheorem{lm}[thm]{Lemma}

\theoremstyle{definition}
\newtheorem{rem}[thm]{Remark}

\newtheorem{df}[thm]{Definition}

\newtheorem{assump}{Assumption}

\theoremstyle{remark}

\begin{document}
\pagestyle{plain}\today
\title{Boundary Lipschitz regularity of solutions for semilinear elliptic equations in divergence form}

\author{Jingqi Liang\\
 \small{School of Mathematical Sciences, Shanghai Jiao Tong University}\\
 \small{Shanghai, China}\\\\
\small{ Lihe Wang}\\
 \small{Department of Mathematics, University of Iowa, Iowa City, IA, USA; School of Mathematical Sciences, Shanghai Jiao Tong University}\\
 \small{Shanghai, China}\\\\
\small{Chunqin Zhou}\\
 \small{School of Mathematical Sciences, Shanghai Jiao Tong University}\\
 \small{Shanghai, China}}

\footnote{The third author was supported partially by NSFC of China (No. 11771285)}

\begin{abstract}

In this paper, we consider the pointwise boundary Lipschitz regularity of solutions for the semilinear elliptic equations in divergence form mainly under some weaker assumptions on nonhomogeneous term and the boundary. If the domain satisfies $C^{1,\text{Dini}}$ condition at a boundary point, and the nonhomogeneous term satisfies Dini continuous condition and Lipschitz Newtonian potential condition,  then the solution is Lipschitz continuous at this point. Furthermore, we generalize this result to Reifenberg $C^{1,\text{Dini}}$ domains.
\end{abstract}

\maketitle

{\bf Keywords:} Boundary Lipschitz regularity, Semilinear elliptic equation, Dini condition, Reifenberg domain.

\section{Introduction}
In this paper, we will investigate the boundary Lipschitz regularity of solutions for the following semilinear elliptic equation in divergence form:
\begin{equation}\label{1}
\left\{
\begin{array}{rcll}
\Delta u&=&\text {div}\overrightarrow{\mathbf{F}}(x,u)\qquad&\text{in}~~\Omega,\\
u&=&g\qquad&\text{on}~~\partial\Omega, \\
\end{array}
\right.
\end{equation}
where $\Omega$ is a bounded domain in $\mathbb{R}^{n}$.

There is a complete regularity theory for the classical Poisson equation
\begin{equation}\label{nond}\Delta u=f.
\end{equation}
 For the interior regularity of solutions, as we all know, $u$ is $C^{2,\alpha}$ for some $0<\alpha<1$ when  $f$ is $C^{\alpha}$, and  $u$ is $C^{2}$  when $f$ is Dini continuous, see \cite{DT, 7}.  Moreover, if $f\in L^\infty$, then any weak solution $u$ satisfies $u \in W^{2, p}_{loc}$ for any $1\leq p<+\infty$, and consequently $u\in C^{1,\alpha}$ for $0<\alpha <1$ but not for $\alpha =1$. So it is clear that $f\in L^{\infty}$ or even $f$ continuous is not strong enough to assure the $C^{1,1}$-regularity.  Recently research activity has thus focused on identifying conditions on $f$ which ensure $W^{2,\infty}_{loc}$ or $C^{1,1}_{loc}$ regularity of $u$.  In \cite{ALS}, Andersson, Lindgren and Shahgholian showed that  the sharp condition to get the $C^{1,1}$ regularity of $u$ is that $f*N$ is $C^{1,1}$  which is slightly weaker than the Dini condition,  where $N$ is the Newtonian potential and $*$ denotes the convolution.


For  the $C^{1,1}$ regularity of solutions for the semilinear Poisson equation
$$\Delta u=f(x,u)\quad \text{in}~~~B_{1},$$
which  are derived from the obstacle problem.
Shahgholian in \cite{6} proved $u$ is $C^{1,1}$ if $f(x,u)$ is Lipschitz continuous in $x$, uniformly in $u$, and $\partial_{u}f\geq-C$. Recently Indrei, Minne and Nurbekyan in \cite{1} obtained the same result under weaker assumptions that $f(x,u)$ is Dini continuous in $u$, uniformly in $x$, and it has a $C^{1,1}$ Newtonian potential in $x$, uniformly in $u$.

With respect to the boundary regularity of solutions, there has been extensive study in the past two decades which is closely related to the regularity of the boundary, such as sphere condition and $C^{1,\text{Dini}}$ condition. It is well-known that the solution of $(\ref{nond})$ is $C^{\alpha}$ up to the boundary when $\partial\Omega$ is Lipschitz. Trudinger in \cite{11, 12} proved the boundary Lipschitz regularity when the boundary satisfies uniform exterior sphere condition. Li and Wang in \cite{13, 14} studied the following Dirichlet problem,
\begin{equation}\label{aij}
\left\{
\begin{array}{rcll}
-a_{ij}(x)\frac{\partial^{2}u}{\partial x_{i}\partial x_{j}}&=&f(x) \qquad&\text{in}~~\Omega,\\
u&=&g\qquad&\text{on}~~\partial\Omega. \\
\end{array}
\right.
\end{equation}
They proved that the solution is differentiable at any point on the boundary  if the domain is convex. Li and Zhang in \cite{15} got the boundary differentiablity of (\ref{aij}) when $g=0$ under the $\gamma$-convexity domain condition which is strictly weaker than the convexity condition. On the other hand, the Dini continuity was a current topic for the regularity theory. In 2011, Ma and Wang \cite{3} considered the following equation,
\begin{equation}\label{fully}
\left\{
\begin{array}{rcll}
F(D^{2}u(x),x)&=&f(x) \qquad&\text{in}~~\Omega,\\
u&=&g\qquad&\text{on}~~\partial\Omega. \\
\end{array}
\right.
\end{equation}
They showed the pointwise $C^{1}$ estimates up to the boundary under the Dini conditions including that $\partial\Omega$ is $C^{1,\text{Dini}}$ (see Definition \ref{boundarydef}) and the boundary value $g$ is $C^{1,\text{Dini}}$. In \cite{4} Huang, Li and Wang proved that the solution of $(\ref{aij})$ is Lipschitz if the boundary satisfies exterior $C^{1,\text{Dini}}$ condition and the solution is differentiable if the boundary is exterior $C^{1}$ Dini and punctually $C^{1}$.  Furthermore, in \cite{5} they extended their results to Reifenberg $C^{1,\text{Dini}}$ domain which is more general than the classical $C^{1,\text{Dini}}$ domain.

However, there are few results on the boundary behavior of solutions for semilinear elliptic equations in the divergence form. In this paper, we study the pointwise boundary Lipschitz regularity of solutions for the semilinear elliptic equation in divergence form under some weaker assumptions on $\overrightarrow{\mathbf{F}}(x,u)$ and the boundary. For convenience we give some notations and definitions of Dini condition.

\
\

{\bf Notations:}

$|x|:=\sqrt{\sum\limits_{i=1}^{n} x_{i}^{2}}$: the Euclidean norm of $x=\left(x_{1}, x_{2}, \ldots, x_{n}\right) \in \mathbb{R}^{n}$.

$|x^{'}|:=\sqrt{\sum\limits_{i=1}^{n-1} x_{i}^{2}}$: the Euclidean norm of $x^{'}=\left(x_{1}, x_{2}, \ldots, x_{n-1}\right) \in \mathbb{R}^{n-1}$.

$B_{r}(x_{0}):=\left\{x \in \mathbb{R}^{n}:|x-x_{0}|<r\right\}$.

$B_{r}:=\left\{x \in \mathbb{R}^{n}:|x|<r\right\}$.

$\Omega_{r}:=B_{r}\cap\Omega$.

$T_{r}:=B_{r}\cap\{x_{n}=0\}=\left\{(x^{'},0) \in \mathbb{R}^{n}:|x^{'}|<r\right\}$.

$\vec{a}\cdot\vec{b}$: the standard inner product of $\vec{a}, \vec{b} \in \mathbb{R}^{n} .$

$\{\vec{e}_{i}\}_{i=1}^{n}$: the standard basis of $\mathbb{R}^{n}$.

\begin{df}\label{boundarydef}Let $x_{0}\in\partial\Omega.$ We say that $\partial\Omega$ is $C^{1,\text{Dini}}$ at $x_{0},$ if there exists a unit vector $\vec{n}$ and a positive constant $r_{0}$, a Dini modulus of continuity $\omega(r)$ satisfying $\int_{0}^{r_{0}} \frac{\omega(r)}{r} d r<\infty$ such that for any $0<r\leq r_{0},$
$$
B_{r}(x_0) \cap\left\{x\in\mathbb{R}^{n}:(x-x_{0}) \cdot \vec{n}>r \omega(r)\right\}\subset B_{r}(x_{0}) \cap \Omega \subset B_{r}(x_0) \cap\left\{x\in\mathbb{R}^{n}:(x-x_{0}) \cdot \vec{n}>-r \omega(r)\right\}.
$$
\end{df}
\begin{rem}
Any modulus of continuity $\omega(t)$ is non-decreasing, subadditive, continuous and satisfies $\omega(0)=0$ (see \cite{2}). Hence any modulus of continuity $\omega(t)$ satisfies
\begin{equation}\label{diniproperty}\frac{\omega(r)}{r}\leq2\frac{\omega(h)}{h},\quad 0<h<r.
\end{equation}
\end{rem}

\begin{df}\label{exrei}(Reifenberg $C^{1,\text{Dini}}$ condition). Let $x_{0}\in\partial\Omega.$ We say that $\Omega$ satisfies the $(r_{0}, \omega)$-Reifenberg $C^{1,\text{Dini}}$ condition at $x_{0}$ if there exists a positive constant $r_{0}$ and a Dini modulus of continuity $\omega(r)$ satisfying $\int_{0}^{r_{0}} \frac{\omega(r)}{r} d r<\infty$ such that for any $0<r\leq r_{0},$ there exists a unit vector $\vec{n}_{r}\in \mathbb{R}^{n}$ such that
$$
B_{r}(x_0) \cap\left\{x\in\mathbb{R}^{n}:(x-x_{0}) \cdot \vec{n}_{r}>r \omega(r)\right\}\subset B_{r}(x_{0}) \cap \Omega \subset B_{r}(x_0) \cap\left\{x\in\mathbb{R}^{n}:(x-x_{0}) \cdot \vec{n}_{r}>-r \omega(r)\right\}.
$$
\end{df}
The following lemma can be found in \cite{5}.
\begin{lm}\label{rem1.4}
If $\Omega$ satisfies $(r_{0}, \omega)$-Reifenberg $C^{1,\text{Dini}}$ condition, then there exists a bounded nonnegative function $S(\theta)\geq1$ such that $\left|\vec{n}_{r}-\vec{n}_{\theta r}\right| \leq S(\theta) \omega(r)$ for each $0<\theta<1$ and $0<r\leq r_{0}.$ Furthermore, for a fixed positive constant $0<\lambda<1,$ $\{\vec{n}_{\lambda^{i}r_{0}}\}_{i=0}^{\infty}$ is a Cauchy sequence. We can set $\lim\limits_{i\rightarrow\infty}\vec{n}_{\lambda^{i}r_{0}}=\vec{n}_{*}$.
\end{lm}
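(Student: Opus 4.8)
The plan is to read off the estimate $|\vec n_r-\vec n_{\theta r}|\le S(\theta)\omega(r)$ from the geometry of the two approximating half-spaces at scales $r$ and $\theta r$, and then obtain the Cauchy property from the Dini summability of $\omega$. Assume without loss of generality that $x_0=0$. Intersecting the scale-$r$ Reifenberg inclusion with the smaller ball $B_{\theta r}$ and combining it with the scale-$\theta r$ inclusion gives
\begin{equation*}
B_{\theta r}\cap\{x:\,x\cdot\vec n_r>r\omega(r)\}\ \subset\ B_{\theta r}\cap\Omega\ \subset\ B_{\theta r}\cap\{x:\,x\cdot\vec n_{\theta r}>-\theta r\,\omega(\theta r)\},
\end{equation*}
together with the symmetric inclusion obtained by exchanging the two scales. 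Thus, inside $B_{\theta r}$, the half-space with unit normal $\vec n_r$ and offset $r\omega(r)$ is nested inside the half-space with unit normal $\vec n_{\theta r}$ and offset $-\theta r\,\omega(\theta r)$, and conversely; since $\omega$ is nondecreasing and $\theta<1$, both offsets are at most $r\omega(r)$.

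The core is an elementary planar fact: if two half-spaces $\{x\cdot\nu_1>a_1\}$, $\{x\cdot\nu_2>a_2\}$ with unit normals $\nu_i$ and $|a_i|\le\varepsilon<\rho/2$ have nested intersections with $B_\rho$, then $|\nu_1-\nu_2|\le C\varepsilon/\rho$ for a universal constant $C$. Indeed, testing the inclusion on a boundary point of the form $\rho v$ with $v\perp\nu_1$, $|v|=1$ (approached from inside $B_\rho$) bounds $\sin\alpha\le C\varepsilon/\rho$, where $\alpha$ is the angle between $\nu_1$ and $\nu_2$, while testing it on a point close to $\rho\,\nu_1$ forces $\cos\alpha\ge-\varepsilon/\rho$, keeping $\alpha$ away from $\pi$; then $|\nu_1-\nu_2|^2=2(1-\cos\alpha)=2\sin^2\alpha/(1+\cos\alpha)\le C\varepsilon^2/\rho^2$. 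Applying this with $\rho=\theta r$ and $\varepsilon=r\omega(r)$ in the regime $\omega(r)<\theta/2$ yields $|\vec n_r-\vec n_{\theta r}|\le C\theta^{-1}\omega(r)$; in the complementary regime $\omega(r)\ge\theta/2$ one simply uses $|\vec n_r-\vec n_{\theta r}|\le 2\le 4\theta^{-1}\omega(r)$. Hence $S(\theta):=\max\{1,\,C/\theta\}$ works (a harmless dependence on $\omega(r_0)$ can be absorbed after shrinking $r_0$), and $S(\theta)$ is finite for every $\theta\in(0,1)$.

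For the second assertion, fix $0<\lambda<1$ and apply the estimate with $\theta=\lambda$ and $r=\lambda^i r_0$, $i\ge0$, to get $|\vec n_{\lambda^i r_0}-\vec n_{\lambda^{i+1}r_0}|\le S(\lambda)\,\omega(\lambda^i r_0)$. Comparing the series with the integral, the monotonicity of $\omega$ gives $\sum_{i\ge1}\omega(\lambda^i r_0)\le(\log(1/\lambda))^{-1}\int_0^{r_0}\frac{\omega(t)}{t}\,dt<\infty$, so by the Dini condition $\sum_{i\ge0}\omega(\lambda^i r_0)<\infty$. Therefore $\sum_{i\ge0}(\vec n_{\lambda^{i+1}r_0}-\vec n_{\lambda^i r_0})$ converges absolutely, its partial sums — which coincide with $\{\vec n_{\lambda^i r_0}\}_{i\ge0}$ up to the additive constant $\vec n_{r_0}$ — form a Cauchy sequence, and we set $\vec n_*:=\lim_{i\to\infty}\vec n_{\lambda^i r_0}$.

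The main obstacle is the geometric estimate, specifically producing a bound with $S(\theta)$ independent of $r$ and of $\Omega$: the degenerate case where $\omega(r)$ is not small relative to $\theta$ must be isolated and handled by the trivial bound $|\vec n_r-\vec n_{\theta r}|\le2$, and one must check that the extremal test points used in the half-space comparison indeed lie, in the limit, in $B_{\theta r}$ (equivalently, that $r\omega(r)<\theta r$ in the nondegenerate regime), which is exactly what the case split secures.
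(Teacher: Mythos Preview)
The paper does not prove this lemma; it states it and attributes it to reference~[5] (Huang--Li--Wang). Your argument supplies a correct self-contained proof of what the paper actually needs. The geometric comparison of the two approximating half-spaces inside $B_{\theta r}$ is the right idea: the inclusion
\[
B_{\theta r}\cap\{x\cdot\vec n_r>r\omega(r)\}\subset B_{\theta r}\cap\Omega\subset B_{\theta r}\cap\{x\cdot\vec n_{\theta r}>-\theta r\,\omega(\theta r)\}
\]
together with its symmetric counterpart forces the two normals to be close, and your test-point computation (perpendicular to $\nu_1$ to bound $\sin\alpha$, parallel to $\nu_1$ to keep $\cos\alpha$ away from $-1$) is the standard way to extract $|\vec n_r-\vec n_{\theta r}|\le C\theta^{-1}\omega(r)$ in the nondegenerate regime. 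The case split $\omega(r)\ge\theta/2$ handled by the trivial bound $|\vec n_r-\vec n_{\theta r}|\le 2$ is exactly what is needed to make the test points admissible. The telescoping/Dini argument for the Cauchy property is correct and routine.

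One remark on the statement itself rather than your proof: your $S(\theta)=\max\{1,C/\theta\}$ is finite for each fixed $\theta\in(0,1)$ but is not bounded on $(0,1)$, so the phrase ``bounded nonnegative function'' in the lemma is imprecise. A uniform-in-$\theta$ bound cannot hold in general, since comparing scales $r$ and $\theta r$ inside $B_{\theta r}$ inevitably costs a factor $\theta^{-1}$. The paper only ever invokes $S(\lambda)$ for one fixed $\lambda$ (in Lemma~\ref{con2}), so your estimate is precisely what the application requires.
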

\begin{df}\label{bvdini}
Let $x_{0} \in \partial \Omega $.  The boundary value $g$ is said to be $C^{1,\text{Dini}}$ at $x_{0}$ with respect to a function $v_{x_0}(x)$,   if there exists a constant vector $\vec{a}$, a positive constant $r_{0}$ and a Dini modulus of continuity $\sigma(r)$ satisfying $\int_{0}^{r_{0}} \frac{\sigma(r)}{r} d r<\infty$ such that for any $0<r \leq r_{0}$ and $x \in \partial \Omega \cap B_{r}(x_{0})$,
$$
|g(x)-v_{x_0}(x)-g(x_{0})-v_{x_0}(x_{0})-\vec{a} \cdot(x-x_{0})| \leq r \sigma(r).
$$
\end{df}

Next we propose the following assumptions on $\overrightarrow{\mathbf{F}}(x,u)$.
\begin{assump}\label{as1} $\overrightarrow{\mathbf{F}}(x,t)\in L^{\infty}(B_{d}\times \R)$ where $d$ is large enough. Moreover
$\overrightarrow{\mathbf{F}}(x,t)$ is Dini continuous in $t$ with continuity modulus $\omega_{1}(r)$, uniformly in $x$, i.e.
$$\left|\overrightarrow{\mathbf{F}}(x,t_{2})-\overrightarrow{\mathbf{F}}(x,t_{1})\right|\leq\omega_{1}(|t_{2}-t_{1}|),
$$
and $\int_{0}^{t_{0}} \frac{\omega_{1}(t)}{t} dt<\infty$, for some $t_{0}>0.$
\end{assump}
\begin{assump}\label{as2}
For every boundary point $x_{0}$ and each $t\in \R$, there exists a function $v_{x_{0}}(\cdot,t)$ in $B_{1}(x_{0})$ satisfying
$$
\Delta v_{x_{0}}(\cdot, t)=\text{div}\overrightarrow{\mathbf{F}}(\cdot,t) \quad \text{in}~~B_{1}(x_{0}).
$$
Furthermore, $v_{x_{0}}(\cdot,t)$  is a Lipschitz function which is  uniform in $x_0$ and $t$ with Lipschitz constant $T$.
\end{assump}
\begin{rem}
(1) We can always assume that $\Omega\subset B_{\frac{d}{2}}$.\\
(2) In the sequel, for $u(x)\in L^{\infty}(\Omega)$, for every boundary point $x_{0}$, we let $v_{x_{0}}(x)$ solve
$$\Delta v_{x_{0}}(x)=\text{div}\overrightarrow{\mathbf{F}}(x,u(x_{0})) \quad \text{in}~~B_{1}(x_{0}).$$
\end{rem}
Our main results are the following two theorems.
\begin{thm}\label{mr1}
Let $x_{0} \in \partial \Omega .$ $\overrightarrow{\mathbf{F}}(x,t)$ satisfies Assumption \ref{as1} and \ref{as2}. If $\partial\Omega$ is $C^{1,\text{Dini}}$ at $x_{0}$ and $g$ is $C^{1,\text{Dini}}$ at $x_{0}$ with respect to $v_{x_{0}}(x)$,
then the solution of $(\ref{1})$ is Lipschitz continuous at $x_{0}$, i.e.
$$|u(x)-u(x_{0})|\leq C|x-x_{0}|,\quad \forall~x\in B_{1}\cap\Omega.$$
where $C=C(n, \Omega, T, \|u\|_{L^{\infty}(\Omega)}, \|g\|_{L^{\infty}(\partial\Omega)})$.
\end{thm}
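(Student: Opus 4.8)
The plan is to reduce the problem to a small perturbation of the flat Laplacian and run a compactness--iteration scheme, in the spirit of \cite{4} and \cite{1}. Translating, assume $x_{0}=0$, and rotate so that the vector $\vec{n}$ of Definition \ref{boundarydef} equals $\vec{e}_{n}$. Write $v_{0}:=v_{x_{0}}$, which by the Remark after Assumption \ref{as2} solves $\Delta v_{0}=\text{div}\,\overrightarrow{\mathbf{F}}(\cdot,u(0))$ in $B_{1}$, and set $w:=u-v_{0}$. Then $w$ is a weak solution of $\Delta w=\text{div}\,\vec{G}$ in $\Omega_{1}$, where $\vec{G}(x):=\overrightarrow{\mathbf{F}}(x,u(x))-\overrightarrow{\mathbf{F}}(x,u(0))$ satisfies $|\vec{G}(x)|\le\omega_{1}(|u(x)-u(0)|)$ by Assumption \ref{as1}; on $\partial\Omega\cap B_{1}$ one has $w=g-v_{0}$, so by Definition \ref{bvdini}, $|w(x)-w(0)-\vec{a}\cdot x|\le|x|\,\sigma(|x|)$. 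Since $v_{0}$ is Lipschitz with constant $T$ (Assumption \ref{as2}) and the region $|x|\ge\frac12$ is trivial, it suffices to prove $|w(x)-w(0)|\le C|x|$ for $x\in\Omega_{1/2}$.

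Next I would record an a priori H\"older bound. As $\overrightarrow{\mathbf{F}}\in L^{\infty}(B_{d}\times\R)$, the field $\vec{H}(x):=\overrightarrow{\mathbf{F}}(x,u(x))$ has $\|\vec{H}\|_{L^{\infty}}\le\|\overrightarrow{\mathbf{F}}\|_{L^{\infty}(B_{d}\times\R)}$ independently of $u$, so $\Delta u=\text{div}\,\vec{H}$ with bounded $\vec{H}$; since $\partial\Omega$ is $C^{1,\text{Dini}}$ at $0$ the complement of $\Omega$ has positive lower density there, and $g$ differs on $\partial\Omega$ near $0$ from the Lipschitz function $v_{0}$ by a function that is pointwise $C^{1}$ at $0$, hence $g$ is Lipschitz at $0$ along $\partial\Omega$. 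Therefore boundary De Giorgi--Nash--Moser estimates give $|u(x)-u(0)|\le C_{1}|x|^{\alpha}$ in $\Omega_{1/2}$ for some $\alpha\in(0,1)$. Consequently $|\vec{G}(x)|\le\omega_{1}(C_{1}|x|^{\alpha})$, and the Dini property of $\omega_{1}$ yields $\sum_{k\ge0}\omega_{1}(C_{1}\rho^{\alpha k})<\infty$ for any fixed $\rho\in(0,1)$. Set $\Theta(r):=\omega_{1}(C_{1}r^{\alpha})+\sigma(r)+\omega(r)$, a Dini modulus, so $\sum_{k}\Theta(\rho^{k})<\infty$.

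The core is an approximation lemma: there are a universal $\rho\in(0,\frac14)$ and $\delta>0$ such that if $\Omega$ is $\delta$-flat in $B_{1}$ (so $B_{1}\cap\{x_{n}>\delta\}\subset\Omega_{1}\subset B_{1}\cap\{x_{n}>-\delta\}$) and $w$ solves $\Delta w=\text{div}\,\vec{G}$ in $\Omega_{1}$ with $\|w\|_{L^{\infty}(\Omega_{1})}\le1$, $\|\vec{G}\|_{L^{\infty}(\Omega_{1})}\le\delta$ and $|w|\le\delta$ on $\partial\Omega\cap B_{1}$, then there is $a\in\R$ with $|a|\le C(n)$ and $\sup_{\Omega_{\rho}}|w-a\,x_{n}|\le\frac{\rho}{2}$. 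One proves this by contradiction, extracting from a putative counterexample sequence (equicontinuity from interior and boundary $C^{\gamma}$ estimates for divergence-form equations) a harmonic limit on $B_{1}^{+}$ vanishing on $T_{1}$, whose odd reflection is harmonic in $B_{1}$ and vanishes at $0$ together with its tangential gradient, hence lies within $\frac{\rho}{2}$ of $\partial_{n}w_{\infty}(0)x_{n}$ on $\Omega_{\rho}$ once $\rho$ is small. I would then iterate at the scales $r_{k}=\rho^{k}$, starting from a small $r_{0}$ (chosen so that $\omega(\rho^{k})$, $\sigma(\rho^{k})$, $\omega_{1}(C_{1}\rho^{\alpha k})$ and $\sum_{k}\Theta(\rho^{k})$ are small enough): with $\eta_{0}:=2\|u\|_{L^{\infty}(\Omega)}+T+|\vec{a}|$, $\eta_{k+1}:=\frac12\eta_{k}+C_{*}\Theta(\rho^{k})$ and linear functions $\ell_{k}(x)=\vec{p}_{k}\cdot x$, $\ell_{0}=\vec{a}\cdot x$, I would maintain $\sup_{\Omega_{\rho^{k}}}|w-w(0)-\ell_{k}|\le\rho^{k}\eta_{k}$; at each step one rescales $\tilde{w}(y):=\frac{1}{\rho^{k}\eta_{k}}(w(\rho^{k}y)-w(0)-\ell_{k}(\rho^{k}y))$, checks (via Definition \ref{boundarydef}, Definition \ref{bvdini} and the bounds on $\vec{G}$ and on $w|_{\partial\Omega}$ above) that the lemma's hypotheses hold — the geometric error that $\ell_{k}-\ell_{k-1}$ is proportional to $x_{n}$ and hence only $O(|\vec{p}_{k}|\rho^{k}\omega(\rho^{k}))$ on $\partial\Omega$, rather than zero, being absorbed into $C_{*}\Theta(\rho^{k})$ — applies the lemma, and puts $\ell_{k+1}:=\ell_{k}+\eta_{k}a\,x_{n}$, so that $\sup_{\Omega_{\rho^{k+1}}}|w-w(0)-\ell_{k+1}|\le\frac12\rho^{k+1}\eta_{k}\le\rho^{k+1}\eta_{k+1}$ and $|\vec{p}_{k+1}-\vec{p}_{k}|\le C(n)\eta_{k}$.

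Since $\Theta$ is Dini, $\sum_{k}\eta_{k}\le 2\eta_{0}+2C_{*}\sum_{k}\Theta(\rho^{k})<\infty$; therefore $\eta_{k}\to0$, $\{\vec{p}_{k}\}$ converges (so in fact $w=u-v_{0}$ is differentiable at $0$), and $|\vec{p}_{k}|\le C$ for all $k$. Consequently $\sup_{\Omega_{\rho^{k}}}|w-w(0)|\le\rho^{k}\eta_{k}+|\vec{p}_{k}|\rho^{k}\le C\rho^{k}$, and interpolating over $|x|\in[\rho^{k+1},\rho^{k}]$ gives $|w(x)-w(0)|\le C|x|$ on $\Omega_{1/2}$; combined with the Lipschitz bound for $v_{0}$ this proves the theorem, with $C$ as in the statement. (One obtains only Lipschitz regularity, not differentiability, of $u$ precisely because $v_{0}$ is merely Lipschitz, which is why proving differentiability of $u-v_{0}$ is the natural route.) The main obstacle is the bookkeeping in the iteration: the perturbation $\vec{G}$ depends on the unknown $u$, so controlling it forces one first to establish the a priori H\"older modulus and then to exploit the Dini property of $\omega_{1}$ so that $\vec{G}$ contributes a summable series; simultaneously one must fix $\delta$, then $r_{0}$, then $C_{*}$ in a compatible order and keep track of the slopes $\vec{p}_{k}$ and of the non-flat-boundary error, so that the normalizing factor $\rho^{k}\eta_{k}$ neither collapses prematurely nor blows up.
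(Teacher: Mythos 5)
Your overall architecture is the same as the paper's — subtract $v_{x_0}$, iterate a flat approximation lemma at geometric scales, show the linear slopes form a Cauchy sequence via the Dini conditions, conclude differentiability of $u-v_{x_0}$ and hence Lipschitz regularity of $u$ — but you make two genuinely different technical choices, and one of them hides a circularity you gesture at but do not resolve.

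First, the approximation lemma. The paper's Lemma 2.5 (and Lemma 2.1 behind it) is proved by \emph{explicit barriers}: a radial comparison function built from the fundamental solution shows $|v|\le C_{0}\mu(x_{n}+\varepsilon)$, and an odd harmonic reflection $h$ then approximates $u-v$ with an error bounded \emph{linearly} in $\varepsilon$, $\lambda^{2}$, $\|g-v\|_{L^{\infty}}$ and $\|\overrightarrow{\mathbf{F}}(x,u)-\overrightarrow{\mathbf{F}}(x,0)\|_{L^{\infty}}$ — with no smallness threshold on any of these. Your compactness version introduces a threshold $\delta$ (the lemma only applies once $\|\vec G\|_{L^{\infty}}\le\delta$, flatness $\le\delta$, boundary data $\le\delta$), so at each stage of the iteration you must verify that the rescaled hypotheses fall below $\delta$; the paper's explicit lemma dispenses with this entirely. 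The compactness route does work, but the proof sketch should address how to compare a sequence $w_{k}$ living on varying domains $\Omega_{k}$ (extension, or a uniform boundary H\"older estimate plus convergence of $\partial\Omega_{k}$ to $T_{1}$).

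Second, and more substantively, you control the perturbation $\vec G=\overrightarrow{\mathbf{F}}(x,u)-\overrightarrow{\mathbf{F}}(x,u(0))$ via an a priori pointwise boundary De Giorgi--Nash--Moser H\"older estimate $|u(x)-u(0)|\le C_{1}|x|^{\alpha}$, and then use the Dini property of $\omega_{1}\circ(\cdot)^{\alpha}$. The paper avoids this extra step: in Lemma 3.2 it bounds $\|u\|_{L^{\infty}(\Omega_{\lambda^{k}})}\le M_{k}+T\lambda^{k}+N_{k}\lambda^{k}$ directly from the inductive quantity $M_{k}$, the Lipschitz bound on $v$, and the running slope $N_{k}$, then uses property \eqref{diniproperty} to get $\omega_{1}(\|u\|_{L^{\infty}(\Omega_{\lambda^{k}})})\le 2\bigl(\tfrac{M_{k}}{\lambda^{k}}+T+N_{k}\bigr)\omega_{1}(\lambda^{k})$, making the H\"older estimate unnecessary. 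Your route is admissible, but it is worth recognizing the paper gets by without it.

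Finally, a gap you half-acknowledge: your recursion $\eta_{k+1}=\tfrac12\eta_{k}+C_{*}\Theta(\rho^{k})$ absorbs the geometric error $O(|\vec p_{k}|\rho^{k}\omega(\rho^{k}))$ into "$C_{*}\Theta(\rho^{k})$", i.e. it implicitly treats $|\vec p_{k}|$ as bounded by a constant; but the bound on $|\vec p_{k}|$ comes from $\sum_{j<k}\eta_{j}$, which in turn depends on $C_{*}$. This is exactly the circularity the paper's Lemma 3.2 breaks with a self-improving estimate: define the partial sums $P_{k}$, write the recursion with the unknown $P_{k}$ (not a fixed constant) on the right, choose $\lambda$ and $k_{0}$ so that the Dini tails $\sum_{i\ge k_{0}}\omega(\lambda^{i})$, $\sum_{i\ge k_{0}}\omega_{1}(\lambda^{i})$ are small, obtain $P_{k+1}\le\tfrac34 P_{k+1}+\text{const}$, and deduce boundedness. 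Your proposal should carry out the analogous argument explicitly (track $\sum_{j\le k}\eta_{j}$ together with $|\vec p_{k}|$, and close the estimate by absorption) rather than fixing $C_{*}$ after the fact; as written, the order "fix $\delta$, then $r_{0}$, then $C_{*}$" does not by itself dissolve the dependence of $C_{*}$ on $\sup_{k}|\vec p_{k}|$.
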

\begin{thm}\label{mr2}
Let $x_{0} \in \partial \Omega .$ $\overrightarrow{\mathbf{F}}(x,t)$ satisfies Assumption \ref{as1} and \ref{as2}. If $\partial\Omega$ satisfies Reifenberg $C^{1,\text{Dini}}$ condition at $x_{0}$ and $g$ is $C^{1,\text{Dini}}$ at $x_{0}$ with respect to $v_{x_{0}}(x)$,
then the solution of $(\ref{1})$ is Lipschitz continuous at $x_{0}$.
\end{thm}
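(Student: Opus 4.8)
Our proof parallels that of Theorem \ref{mr1}; the one genuinely new point is that the approximating half-space now rotates with the scale, and this rotation is controlled by Lemma \ref{rem1.4}. After a translation we take $x_0=0$. The first step is the same reduction as in Theorem \ref{mr1}: set $w:=u-v_0-P$, where $v_0$ is the Lipschitz function of Assumption \ref{as2} with $t=u(0)$ (normalized so that $v_0(0)=0$) and $P$ is the affine function supplied by Definition \ref{bvdini}. Then $w$ is continuous on $\overline{\Omega_{r_0}}$, $\Delta w=\operatorname{div}\vec G$ weakly in $\Omega_{r_0}$ with $\vec G:=\vec F(\cdot,u)-\vec F(\cdot,u(0))$, so that $|\vec G(x)|\le\omega_1(|u(x)-u(0)|)$ by Assumption \ref{as1}; moreover $|w(x)|\le|x|\,\sigma(|x|)$ on $\partial\Omega\cap B_{r_0}$ (in particular $w(0)=0$) and $\|w\|_{L^\infty(\Omega_{r_0})}\le K$ for a constant $K$ depending only on $n,\Omega,T,\|u\|_{L^\infty},\|g\|_{L^\infty}$. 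Since $u(x)-u(0)=\bigl(w(x)-w(0)\bigr)+\bigl(v_0(x)-v_0(0)\bigr)+\vec a\cdot x$, with $\vec a$ the vector of Definition \ref{bvdini} (whose length is controlled by the data), it suffices to prove $|w(x)|\le C|x|$ for $x\in\Omega_{r_0}$; the estimate on all of $B_1\cap\Omega$ then follows by a routine covering argument together with interior gradient estimates.

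Fix $\lambda\in(0,\tfrac12)$, set $r_k:=\lambda^k r_0$, and let $\vec n_k:=\vec n_{r_k}$ be the unit vectors from the Reifenberg $C^{1,\text{Dini}}$ condition, so that $|\vec n_k-\vec n_{k+1}|\le S(\lambda)\,\omega(r_k)$ and $\vec n_k\to\vec n_*$ by Lemma \ref{rem1.4}. The plan is to prove by induction on $k$ that there are real numbers $c_k$ with $\sup_{\Omega_{r_k}}\bigl|w(x)-c_k\,(x\cdot\vec n_k)\bigr|\le A_k\,r_k$, where $\{c_k\}$ and $\{A_k\}$ are bounded in terms of the data. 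The engine is a flat-boundary compactness lemma for divergence-form equations: for every $\varepsilon>0$ there is $\delta>0$ such that if $\Omega'$ is squeezed between $\{y\cdot e_n>\delta\}$ and $\{y\cdot e_n>-\delta\}$ inside $B_1$ and $h\in W^{1,2}\cap C(\overline{\Omega'\cap B_1})$ solves $\Delta h=\operatorname{div}\vec H$ weakly with $\|h\|_{L^\infty}\le1$, $\|\vec H\|_{L^\infty}\le\delta$ and $|h|\le\delta$ on $\partial\Omega'\cap B_1$, then there is $c\in\R$ with $|c|\le C(n)$ and $\sup_{\Omega'\cap B_\lambda}|h(y)-c\,y_n|\le\tfrac12\lambda$; the limiting function, extracted by De Giorgi--Nash--Moser interior estimates and boundary Hölder estimates, is harmonic in a half-ball, vanishes on the flat face and is bounded, hence a multiple of $y_n$. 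Applying the rescaled lemma at scale $r_{k-1}$ in the frame of $\vec n_{k-1}$ to a suitable normalization of $w-c_{k-1}(x\cdot\vec n_{k-1})$ — using the inductive bound $\sup_{\Omega_{r_{k-1}}}|u-u(0)|\le C r_{k-1}$ to get $\|\vec G\|_{L^\infty(\Omega_{r_{k-1}})}\le\omega_1(Cr_{k-1})$, the boundary bound $|w|\le|x|\sigma(|x|)$, and the flatness $\omega(r_{k-1})$ — produces a new slope relative to $\vec n_{k-1}$; replacing $\vec n_{k-1}$ by $\vec n_k$ costs at most $C\,r_k\,S(\lambda)\,\omega(r_{k-1})$ by Lemma \ref{rem1.4}. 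This yields $A_k\le\tfrac12 A_{k-1}+C\bigl(\omega(r_{k-1})+\sigma(r_{k-1})+\omega_1(Cr_{k-1})+S(\lambda)\omega(r_{k-1})\bigr)$, with $|c_k-c_{k-1}|$ obeying the same bound.

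Because $r_k=\lambda^k r_0$ is geometric, $\sum_k\omega(r_k)$, $\sum_k\sigma(r_k)$ and $\sum_k\omega_1(Cr_k)$ are comparable to $\int_0^{r_0}\tfrac{\omega(r)}{r}\,dr$, $\int_0^{r_0}\tfrac{\sigma(r)}{r}\,dr$ and $\int_0^{Cr_0}\tfrac{\omega_1(t)}{t}\,dt$, all finite by the Dini hypotheses, so the error terms are summable and iterating the recursion gives $\sup_k A_k<\infty$ and $\sup_k|c_k|<\infty$. Hence $\sup_{\Omega_{r_k}}|w|\le\bigl(\sup_k|c_k|+\sup_k A_k\bigr)r_k\le Cr_k$, and for $r\in(0,r_0]$, choosing $k$ with $r_{k+1}<r\le r_k$ gives $\sup_{\Omega_r}|w|\le\sup_{\Omega_{r_k}}|w|\le Cr_k=C\lambda^{-1}r_{k+1}<C\lambda^{-1}r$, which is the desired Lipschitz bound for $w$, and the theorem follows. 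The main obstacle specific to the Reifenberg setting — the new point relative to Theorem \ref{mr1} — is keeping the rotating frames under control: one must re-center the approximating hyperplane to $\vec n_k$ at every step rather than work once and for all with $\vec n_*$, because the latter would introduce a cumulative error $\sum_{j\ge k}|\vec n_j-\vec n_{j+1}|$ that need not be summable over $k$ even for a Dini $\omega$, whereas the per-step re-centering cost is bounded by $C\,S(\lambda)\,\omega(r_{k-1})$, which is summable by Lemma \ref{rem1.4}. Beyond this, the heart of the matter — shared with Theorem \ref{mr1} — is the flat-boundary compactness lemma, where one cannot use explicit barriers as in the non-divergence case and the limit must be passed through weak solutions, together with the usual Dini-iteration bookkeeping: the normalization factor at each scale must incorporate the tails of the error series so that the rescaled boundary data stays below $\delta$, and the order of the induction must be respected so that the $u$-dependence inside $\vec G$ is always evaluated via the bound already established at the previous, larger scale.
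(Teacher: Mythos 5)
Your outline reproduces the paper's overall iteration scheme faithfully: re-center the approximating linear profile at every scale on the current unit normal $\vec n_k$ rather than on $\vec n_*$, track two sequences (a flatness $A_k$ and a slope $c_k$, the analogues of the paper's $M_i/\lambda^i$ and $N_i$), close the recursion using the Dini summability of $\omega,\sigma,\omega_1$ together with Lemma~\ref{rem1.4}, and read off the Lipschitz bound by interpolating between geometric scales. Your observation that $\sum_k|\vec n_k-\vec n_*|$ need not be summable for a Dini $\omega$, so that one must rotate the frame step by step, is correct and matches what Lemmas~\ref{bmkm2}--\ref{con2} actually do.

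Where you depart from the paper is in the core improvement-of-flatness lemma, and here two statements in your write-up are wrong. First, you assert that ``one cannot use explicit barriers as in the non-divergence case and the limit must be passed through weak solutions,'' but Lemma~\ref{fs} in the paper is proved with an explicit barrier: the solution $u$ of the divergence-form problem is split as $u=v+(u-v)$ where $v$ is the harmonic function with $v=0$ on $B_1\cap\partial\Omega$ and $v=u$ on $\partial B_1\cap\Omega$, the inhomogeneous remainder $u-v$ is controlled in $L^\infty$ by Theorem~\ref{lemma1}, and the barrier $l$ built from the fundamental solution $\Gamma$ is then applied to the harmonic piece $v$. There is no compactness or De Giorgi--Nash--Moser machinery anywhere in the paper; a compactness argument works too, but it is a genuinely different route, not a forced one, and it trades the paper's explicit constants $C_0,C_1,C_2$ for non-effective ones. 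Second, your compactness limit ``is harmonic in a half-ball, vanishes on the flat face and is bounded, hence a multiple of $y_n$'' is false as stated: $y_1y_n$ is bounded, harmonic in $B_1^+$, and vanishes on $\{y_n=0\}$ without being a multiple of $y_n$. What is true, and what the paper uses in Lemma~\ref{kl}, is that the odd reflection $h_0$ of the limit satisfies $|h_0(y)-Dh_0(0)\cdot y|\le C|y|^2$ with $Dh_0(0)\cdot y=c\,y_n$, so that on $B_\lambda$ one has $|h_0-c\,y_n|\le C\lambda^2\le\lambda/2$ after choosing $\lambda$ small; your conclusion $\sup_{\Omega'\cap B_\lambda}|h-c\,y_n|\le\lambda/2$ then follows from this together with the compactness approximation, but not from the claim that $h_0$ is literally a multiple of $y_n$. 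With these two points corrected, your proposal is a valid alternative proof of the same structure.
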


To prove these two theorems, the main method is iterative scheme. In fact we can approximate $u$ by $v$ defined by Assumption \ref{as2} and to show this approximation can be improved from $B_{1}$ to $B_{\lambda}$. The key point of our proof is that the main part of $u$ is a Lipschitz function $v$ and a linear function.  We organize the paper as follows. In section 2 we will give some necessary lemmas. Next we prove the Lipschitz regularity under $C^{1,\text{Dini}}$ condition in Section 3. Finally, we extend $C^{1,\text{Dini}}$ condition to Reifenberg $C^{1,\text{Dini}}$ condition and give a proof of Theorem \ref{mr2} in Section 4.

\section{Preliminary tools}
In this section we will give a general approximation lemma of the following divergence form elliptic equation:
\begin{equation}\label{2}
\Delta u=\text{div}\overrightarrow{\mathbf{F}} \quad \text{in}~~\Omega,
\end{equation}
where $\Omega$ is a bounded domain. After proving the approximation lemma, we will give a key lemma which will be used repeatedly in Section 3 and 4. We mainly assume that the boundary lies  between two parallel hyperplanes with a very small distance.
\begin{lm}\label{fs}
$0\in\partial\Omega,$ $B_{1}\cap\{x\in\mathbb{R}^{n}: x_{n}>\varepsilon\}\subset B_{1}\cap\Omega\subset B_{1}\cap\{x\in\mathbb{R}^{n}: x_{n}>-\varepsilon\}$ for some $0<\varepsilon<\frac{1}{4}$. For any $\overrightarrow{\mathbf{F}}(x)\in L^{\infty}(B_{1}\cap\Omega),~g\in L^{\infty}(B_{1}\cap\partial\Omega),$ if $u$ is a weak solution of
\begin{equation}\label{3}
\left\{
\begin{array}{rcll}
\Delta u&=&\text{div}\overrightarrow{\mathbf{F}}\qquad&\text{in}~~B_{1}\cap\Omega,\\
u&=&g\qquad&\text{on}~~B_{1}\cap\partial\Omega, \\
\end{array}
\right.
\end{equation}
then there exists a universal constant $C_{0}$ and a harmonic function $h$ defined in $B_{\frac{1}{4}}$ which is odd with respect to $x_{n}$ satisfying
$$\|h\|_{L^{\infty}(B_{\frac{1}{4}})}\leq(1+2C_{0}\varepsilon)\|u\|_{L^{\infty}(B_{1}\cap\Omega)}$$
such that
$$\|u-h\|_{L^{\infty}(B_{\frac{1}{4}}\cap\Omega)}\leq \|g\|_{L^{\infty}(B_{1}\cap\partial\Omega)}+5C_{0}\varepsilon\|u\|_{L^{\infty}(B_{1}\cap\Omega)}+C\|\overrightarrow{\mathbf{F}}\|_{L^{\infty}(B_{1}\cap\Omega)},
$$
where $C$ is a constant depending only on $n$ and $\Omega$.
\end{lm}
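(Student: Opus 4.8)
The plan is to decompose $u$ as the sum of three pieces: the "boundary value" contribution, the "right-hand side" contribution, and a piece that is controlled by the flatness parameter $\varepsilon$ and is close to an odd harmonic function. First I would split $u=u_1+u_2$ where $u_1$ solves $\Delta u_1=\operatorname{div}\overrightarrow{\mathbf{F}}$ in $B_1\cap\Omega$ with $u_1=0$ on $B_1\cap\partial\Omega$, and $u_2$ solves $\Delta u_2=0$ in $B_1\cap\Omega$ with $u_2=g$ on $B_1\cap\partial\Omega$. For $u_1$, a standard $W^{1,2}$-energy estimate (or the usual estimate for the Dirichlet problem with divergence-form datum) gives $\|u_1\|_{L^\infty(B_{1/2}\cap\Omega)}\le C(n,\Omega)\,\|\overrightarrow{\mathbf{F}}\|_{L^\infty(B_1\cap\Omega)}$; this is where the constant $C=C(n,\Omega)$ in the statement comes from, and it absorbs the last term on the right-hand side of the claimed inequality.

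Next, the heart of the argument is the harmonic piece $u_2$. Because $\partial\Omega$ is squeezed between $\{x_n=\varepsilon\}$ and $\{x_n=-\varepsilon\}$ inside $B_1$, I would compare $u_2$ with its odd reflection across $\{x_n=0\}$. Concretely, extend $u_2$ to $B_1$ by setting $\tilde u_2(x',x_n)=-\tilde u_2(x',-x_n)$ on the part of $B_1\cap\{x_n<-\varepsilon\}$ lying outside $\Omega$, and on the thin slab $|x_n|\le\varepsilon$ define $h$ to be the harmonic function in $B_{1/2}$ (or a suitable sub-ball) that is odd in $x_n$ and agrees with this reflected data on the appropriate sphere; equivalently, let $h$ be the solution of the Dirichlet problem in $B_{1/2}$ with boundary data equal to the odd extension of $u_2|_{\partial B_{1/2}\cap\{x_n\ge\varepsilon\}}$. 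By the maximum principle, $\|h\|_{L^\infty(B_{1/2})}\le\|u_2\|_{L^\infty}$ plus an $O(\varepsilon)$ correction coming from the fact that $u_2$ need not vanish exactly on $\{x_n=0\}$; tracking this correction with barrier functions of the form $c(\,|x|^2-\text{something}\,)$ or linear-in-$x_n$ barriers on the slab produces the factor $(1+2C_0\varepsilon)$. Then on $B_{1/4}\cap\Omega$ one estimates $|u_2-h|$: at points with $x_n\ge\varepsilon$ both functions are harmonic and their difference is harmonic, so it is controlled on $B_{1/4}$ by its values near $\partial B_{1/2}$, which differ by at most $O(\varepsilon)\|u_2\|_{L^\infty}$ (again via the maximum principle and the fact that the reflected data matches up to the slab thickness); at points with $|x_n|<\varepsilon$ one uses that both $u_2$ and $h$ are within $C_0\varepsilon\|u\|_{L^\infty}$ of their traces on $\{x_n=\pm\varepsilon\}$, using interior gradient estimates for harmonic functions on balls of radius comparable to the distance to the flat boundary. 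Collecting the pieces gives $\|u_2-h\|_{L^\infty(B_{1/4}\cap\Omega)}\le\|g\|_{L^\infty(B_1\cap\partial\Omega)}+5C_0\varepsilon\|u\|_{L^\infty(B_1\cap\Omega)}$, and adding back the bound on $u_1$ yields the lemma.

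I expect the main obstacle to be the bookkeeping of the $\varepsilon$-corrections for the harmonic part near the slab $|x_n|\le\varepsilon$: one must be careful that the odd reflection is only an \emph{approximate} symmetry (since $\partial\Omega$ is not exactly $\{x_n=0\}$), so $h$ is not literally equal to $u_2$ on any honest hypersurface, and every comparison has to be done with explicit barriers whose size is linear in $\varepsilon$. The cleanest way to organize this is probably to first prove the estimate in the model case where $\partial\Omega\cap B_1$ is exactly $\{x_n=0\}$ (there $h$ is just the odd reflection of $u_2$ and the error is only $\|g\|_{L^\infty}$), and then perturb, using that moving the boundary by $\varepsilon$ changes $u_2$ in $L^\infty(B_{1/4})$ by at most $C_0\varepsilon\|u\|_{L^\infty}$ — this last fact itself following from the maximum principle applied to $u$ on the symmetric difference of the two domains together with the trivial bound $\|u\|_{L^\infty}$ on the thin region. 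The constants $C_0$ and $5$ in the statement are then just whatever comes out of adding up three or four such comparisons, and $C_0$ is "universal" precisely because each comparison is done on a fixed ball against an explicit linear-in-$x_n$ barrier, independent of $n$ and $\Omega$.
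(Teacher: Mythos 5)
Your proposal has the right skeleton — decompose $u$ into a harmonic piece plus a remainder, absorb the remainder into $\|g\|+C\|\overrightarrow{\mathbf{F}}\|$ via a global $L^\infty$ estimate, and approximate the harmonic piece by an odd harmonic function $h$ with an error linear in $\varepsilon$. The decomposition you choose is the "dual" of the paper's: the paper takes $v$ harmonic in $B_1\cap\Omega$ with $v=u$ on $\partial B_1\cap\Omega$ and $v=0$ on $B_1\cap\partial\Omega$, so the inhomogeneous part $u-v$ carries both $g$ and $\overrightarrow{\mathbf{F}}$; you instead give the harmonic piece $u_2$ the boundary data $g$ and let $u_1$ carry only $\overrightarrow{\mathbf{F}}$. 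Either bookkeeping works, but the paper's is cleaner because the function being compared to $h$ vanishes identically on $B_1\cap\partial\Omega$, which is exactly what the barrier step needs.

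The genuine gap is the barrier step itself, and in two ways. First, a barrier that is "linear in $x_n$" (or of the form $c(|x|^2-\dots)$) on a fixed ball does not work as stated, because of the corners where $\partial\Omega$ meets the outer sphere: a linear barrier $L=C\mu(x_n+\varepsilon)$ vanishes near $\{x_n=-\varepsilon\}$ while the harmonic piece can be of size $\mu$ on $\partial B_{3/4}$ there, so $L$ cannot dominate it on $\partial(B_{3/4}\cap\Omega)$ for any fixed $C$. The paper circumvents this by using, for each $y'\in T_{1/4}$, the annulus barrier
$$
l(x)=\frac{\Gamma\bigl(x-(y',-\tfrac14-\varepsilon)\bigr)-\Gamma(\tfrac14)}{\Gamma(\tfrac34)-\Gamma(\tfrac14)}\,\mu,
$$
which equals $0$ on the inner sphere $\partial B_{1/4}(y',-\tfrac14-\varepsilon)$ (tangent to $\{x_n=-\varepsilon\}$ from below) and equals $\mu$ on $\partial B_{3/4}(y',-\tfrac14-\varepsilon)$; the maximum principle then gives $|v|\le l$, and the bounded radial slope of $l$ at the inner sphere yields $|v(x)|\le C_0\mu(x_n+\varepsilon)$ on $\overline{B_{1/4}}\cap\Omega$. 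It is \emph{this} pointwise bound, not a crude $\|\cdot\|_{L^\infty}$ comparison on the slab, that makes the subsequent construction of $h$ (odd in $x_n$, equal to $v$ on $\partial B_{1/4}^+\cap\{x_n\ge\varepsilon\}$ and to $2C_0\mu\varepsilon$ on the remaining thin spherical cap) and the estimates $\|h\|\le(1+2C_0\varepsilon)\mu$, $\|v-h\|\le 5C_0\mu\varepsilon$ go through. Second, your suggested shortcut — prove the flat model case, then argue that "moving the boundary by $\varepsilon$ changes $u_2$ in $L^\infty(B_{1/4})$ by at most $C_0\varepsilon\|u\|_{L^\infty}$" by the maximum principle on the symmetric difference — is circular: the maximum principle on the slab only gives a bound of order $\|u\|_{L^\infty}$, not $\varepsilon\|u\|_{L^\infty}$, and upgrading it to $O(\varepsilon)$ is precisely the barrier estimate you are trying to avoid proving. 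So the barrier argument has to be made precise, along the lines of the annulus construction above, before the rest of your outline closes.
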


In order to prove this lemma, we need the following estimate which can be found in many basic books of elliptic partial differential equations, such as Chapter 8 in \cite{DT}.
\begin{thm}\label{lemma1}
Let $\Omega$ be a bounded domain in $\mathbb{R}^{n}.$ We suppose that $\overrightarrow{\mathbf{F}}\in L^{q}(\Omega),$ for some $q>n.$ Then if $u$ is a $W^{1,2}(\Omega)$ solution of (\ref{2}), then we have
$$\|u\|_{L^{\infty}(\Omega)}\leq\|u\|_{L^{\infty}(\partial\Omega)}+C\|\overrightarrow{\mathbf{F}}\|_{L^{q}(\Omega)},
$$
where $C=C(n,q,|\Omega|).$
\end{thm}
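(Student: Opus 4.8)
The plan is to establish this global $L^{\infty}$ bound by the classical De Giorgi--Stampacchia truncation argument, whose only inputs are the Sobolev inequality on $W^{1,2}_{0}(\Omega)$ and the strict inequality $q>n$. First I record the weak formulation of $(\ref{2})$: $u\in W^{1,2}(\Omega)$ satisfies $\int_{\Omega}\nabla u\cdot\nabla\varphi\,dx=\int_{\Omega}\overrightarrow{\mathbf{F}}\cdot\nabla\varphi\,dx$ for all $\varphi\in W^{1,2}_{0}(\Omega)$. Put $k_{0}:=\sup_{\partial\Omega}u$ in the usual $W^{1,2}$-trace sense, so that $k_{0}\le\|u\|_{L^{\infty}(\partial\Omega)}$ and $(u-k)^{+}\in W^{1,2}_{0}(\Omega)$ for every $k\ge k_{0}$. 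For such $k$ set $A(k):=\{x\in\Omega:u(x)>k\}$ and test the equation with $\varphi=(u-k)^{+}$; since $\nabla\varphi=\nabla u\,\chi_{A(k)}$, Cauchy--Schwarz gives
$$\int_{\Omega}|\nabla(u-k)^{+}|^{2}\,dx=\int_{A(k)}\overrightarrow{\mathbf{F}}\cdot\nabla u\,dx\le\Big(\int_{A(k)}|\overrightarrow{\mathbf{F}}|^{2}\Big)^{1/2}\Big(\int_{A(k)}|\nabla u|^{2}\Big)^{1/2},$$
whence $\int_{\Omega}|\nabla(u-k)^{+}|^{2}\le\int_{A(k)}|\overrightarrow{\mathbf{F}}|^{2}$.

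Next I would apply Sobolev to $(u-k)^{+}$. Taking $n\ge3$ first, with $2^{\ast}=2n/(n-2)$, followed by Hölder with exponent $q/2$,
$$\|(u-k)^{+}\|_{L^{2^{\ast}}(\Omega)}^{2}\le C(n)\int_{A(k)}|\overrightarrow{\mathbf{F}}|^{2}\le C(n)\,\|\overrightarrow{\mathbf{F}}\|_{L^{q}(\Omega)}^{2}\,|A(k)|^{1-2/q}.$$
For $h>k\ge k_{0}$ one has $A(h)\subset A(k)$ and $u-k\ge h-k$ on $A(h)$, so $(h-k)^{2}|A(h)|^{2/2^{\ast}}\le\|(u-k)^{+}\|_{L^{2^{\ast}}(\Omega)}^{2}$; combining and raising to the power $2^{\ast}/2$ gives
$$|A(h)|\le\frac{\big(C(n)\,\|\overrightarrow{\mathbf{F}}\|_{L^{q}(\Omega)}^{2}\big)^{2^{\ast}/2}}{(h-k)^{2^{\ast}}}\,|A(k)|^{\beta},\qquad\beta:=\frac{2^{\ast}}{2}\Big(1-\frac2q\Big)=\frac{n(q-2)}{q(n-2)}.$$
The decisive observation is that $\beta>1$ is \emph{equivalent} to $q>n$; this super-linearity is precisely what makes Stampacchia's iteration lemma (cf.\ Chapter 8 of \cite{DT}) applicable: $\varphi(k):=|A(k)|$ is non-increasing and satisfies $\varphi(h)\le M(h-k)^{-\gamma}\varphi(k)^{\beta}$ with $\gamma=2^{\ast}$, $M=\big(C(n)\|\overrightarrow{\mathbf{F}}\|_{L^{q}(\Omega)}^{2}\big)^{\gamma/2}$, and $\varphi(k_{0})\le|\Omega|$, so $\varphi(k_{0}+d)=0$ for $d^{\gamma}=M\,2^{\gamma\beta/(\beta-1)}\,|\Omega|^{\beta-1}$, i.e.\ $u\le k_{0}+d$ a.e.\ in $\Omega$ with $d=C(n,q)\,|\Omega|^{(\beta-1)/\gamma}\,\|\overrightarrow{\mathbf{F}}\|_{L^{q}(\Omega)}$. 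Absorbing the fixed power of $|\Omega|$ into the constant yields $\sup_{\Omega}u\le\|u\|_{L^{\infty}(\partial\Omega)}+C(n,q,|\Omega|)\|\overrightarrow{\mathbf{F}}\|_{L^{q}(\Omega)}$.

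To conclude I would note that $-u$ is a $W^{1,2}(\Omega)$ weak solution of $\Delta(-u)=\text{div}(-\overrightarrow{\mathbf{F}})$ with $\|-\overrightarrow{\mathbf{F}}\|_{L^{q}}=\|\overrightarrow{\mathbf{F}}\|_{L^{q}}$, so the estimate just proved applies verbatim to $-u$; taking the maximum of the two one-sided bounds and using $\sup_{\partial\Omega}(\pm u)\le\|u\|_{L^{\infty}(\partial\Omega)}$ gives $\|u\|_{L^{\infty}(\Omega)}\le\|u\|_{L^{\infty}(\partial\Omega)}+C(n,q,|\Omega|)\|\overrightarrow{\mathbf{F}}\|_{L^{q}(\Omega)}$. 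For $n=2$ the only change is to replace $2^{\ast}$ by any finite exponent $p>2q/(q-2)$, which is legitimate since $W^{1,2}_{0}(\Omega)\hookrightarrow L^{p}(\Omega)$ for all $p<\infty$ when $|\Omega|<\infty$, and which again forces the corresponding exponent $\tfrac p2(1-2/q)$ to exceed $1$; the case $n=1$ is elementary. I expect the only real bookkeeping to be tracking the exponents through the Sobolev--Hölder step so as to see cleanly that $q>n$ is exactly the threshold making the Stampacchia iteration terminate; everything else is routine.
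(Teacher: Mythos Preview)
Your proof is correct and follows the standard De~Giorgi--Stampacchia truncation argument. The paper itself does not give a proof of this theorem at all: it simply states the estimate and cites Chapter~8 of Gilbarg--Trudinger \cite{DT}, where precisely the argument you have written out (test with $(u-k)^{+}$, apply Sobolev and H\"older, then iterate via Stampacchia's lemma using $q>n\Leftrightarrow\beta>1$) is carried out. So you have supplied the details the paper deliberately omits, and your treatment of the low-dimensional cases $n=1,2$ is the usual one as well.
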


Now we give the proof of Lemma \ref{fs}.
\begin{proof}
Let $v$ solve the following problem
\begin{equation}\label{4}
\left\{
\begin{array}{rcll}
\Delta v&=&0\qquad&\text{in}~~B_{1}\cap\Omega,\\
v&=&u\qquad&\text{on}~~\partial B_{1}\cap\Omega, \\
v&=&0\qquad&\text{on}~~B_{1}\cap\partial\Omega.\\
\end{array}
\right.
\end{equation}
Then by the maximum principle, we have
$$|v|\leq\|u\|_{L^{\infty}(B_{1}\cap\Omega)}\quad \text{in}~~B_{1}\cap\Omega.
$$
Next we denote $\|u\|_{L^{\infty}(B_{1}\cap\Omega)}$ by $\mu$. Combining with (\ref{3}) and (\ref{4}) we have
\begin{eqnarray*}
\left\{
\begin{array}{rcll}
\Delta (u-v)&=&\text{div}\overrightarrow{\mathbf{F}}\qquad&\text{in}~~B_{1}\cap\Omega,\\
u-v&=&0\qquad&\text{on}~~\partial B_{1}\cap\Omega, \\
u-v&=&g\qquad&\text{on}~~B_{1}\cap\partial\Omega.\\
\end{array}
\right.
\end{eqnarray*}
By the previous lemma we obtain
\begin{equation}\label{step1}
\|u-v\|_{L^{\infty}(B_{\frac{1}{2}}\cap\Omega)}\leq\|g\|_{L^{\infty}(B_{1}\cap\partial\Omega)}+C\|\overrightarrow{\mathbf{F}}\|_{L^{\infty}(B_{1}\cap\Omega)}.
\end{equation}
Let $\Gamma$ be defined for $x\in\mathbb{R}^{n}\setminus\{0\}$ by
\begin{eqnarray*} \Gamma(x)=\Gamma(|x|)=
\begin{cases}
-\frac{1}{2\pi}\ln|x|, \quad &n=2,\\
\frac{1}{(n-2)\omega_{n}}|x|^{2-n},\quad &n\geq3,\\
\end{cases}
\end{eqnarray*}
where $\omega_{n}$ is the surface area of the unit sphere in $\mathbb{R}^{n}$. This function $\Gamma$ is usually called the fundamental solution of the Laplace operator. By a simple calculation, we have $\Delta\Gamma=0$ in $\mathbb{R}^{n}\setminus\{0\}$. Then for any $y=(y',0)\in T_{\frac{1}{4}}$, we consider a function
$$l(x)=\frac{\Gamma\left(x-\left(y',-\frac{1}{4}-\varepsilon\right)\right)-\Gamma\left(\frac{1}{4}\right)}{\Gamma\left(\frac{3}{4}\right)-\Gamma\left(\frac{1}{4}\right)}\mu.
$$
Clearly $l(x)$ is harmonic between $B_{\frac{1}{4}}(y',-\frac{1}{4}-\varepsilon)$ and $B_{\frac{3}{4}}(y',-\frac{1}{4}-\varepsilon)$ and
\begin{equation}\label{5}
\left\{
\begin{array}{rcll}
&l(x)&=0\qquad&\text{on}~~\partial B_{\frac{1}{4}}(y',-\frac{1}{4}-\varepsilon),\\
0<&l(x)&<\mu\qquad&\text{between}~B_{\frac{1}{4}}(y',-\frac{1}{4}-\varepsilon)~\text{and}~B_{\frac{3}{4}}(y',-\frac{1}{4}-\varepsilon), \\
&l(x)&=\mu\qquad&\text{on}~~\partial B_{\frac{3}{4}}(y',-\frac{1}{4}-\varepsilon).\\
\end{array}
\right.
\end{equation}
From (\ref{4}) and (\ref{5}) we get $v-l$ satisfies
\begin{eqnarray*}
\left\{
\begin{array}{rcll}
\Delta (v-l)&=&0\qquad&\text{in}~~B_{\frac{3}{4}}(y',-\frac{1}{4}-\varepsilon)\cap\Omega,\\
v-l&\leq&0\qquad&\text{on}~~\partial B_{\frac{3}{4}}(y',-\frac{1}{4}-\varepsilon)\cap\Omega, \\
v-l&\leq&0\qquad&\text{on}~~B_{\frac{3}{4}}(y',-\frac{1}{4}-\varepsilon)\cap\partial\Omega.\\
\end{array}
\right.
\end{eqnarray*}
Applying the maximum principle again, it yields that
$$v\leq l \quad \text{in}~~B_{\frac{3}{4}}(y',-\frac{1}{4}-\varepsilon)\cap\Omega,
$$
similarly, repeat the above process for $v+l$, it's easy to see that $v\geq-l~ \text{in}~B_{\frac{3}{4}}(y',-\frac{1}{4}-\varepsilon)\cap\Omega,
$
then
$$|v|\leq l \quad \text{in}~~B_{\frac{3}{4}}(y',-\frac{1}{4}-\varepsilon)\cap\Omega.
$$
Furthermore for arbitrary $x_{1}\in\partial B_{\frac{1}{4}}(y',-\frac{1}{4}-\varepsilon)$, in the radial direction, we have
$$\frac{l(x)-l(x_{1})}{|x-x_{1}|}\leq C_{0}\mu \quad \text{between}~B_{\frac{1}{4}}(y',-\frac{1}{4}-\varepsilon)~\text{and}~B_{\frac{3}{4}}(y',-\frac{1}{4}-\varepsilon),
$$
then
$$l(x)\leq C_{0}\mu \text{dist}(x,\partial B_{\frac{1}{4}}(y',-\frac{1}{4}-\varepsilon)).
$$
In particular along the $x_{n}-$direction, we get $l(x)\leq C_{0}\mu(x_{n}+\varepsilon)$. Since $y$ can be chosen in $T_{\frac{1}{4}}$ arbitrarily, then
\begin{equation}\label{6}
|v(x)|\leq l(x)\leq C_{0}\mu(x_{n}+\varepsilon) \quad \text{in}~\overline{B_{\frac{1}{4}}}\cap\Omega.
\end{equation}
Then (\ref{4}) and (\ref{6}) imply that $v$ satisfies the following conditions,
\begin{eqnarray*}
\left\{
\begin{array}{rcll}
\Delta v&=&0\qquad&\text{in}~~B_{\frac{1}{4}}\cap\Omega,\\
|v(x)|&\leq& C_{0}\mu(x_{n}+\varepsilon)\qquad&\text{in}~~ \overline{B_{\frac{1}{4}}}\cap\Omega, \\
v&=&0\qquad&\text{on}~~B_{\frac{1}{4}}\cap\partial\Omega.\\
\end{array}
\right.
\end{eqnarray*}
Now it's time to find the harmonic function. We take $h$ be a harmonic function defined in $B_{\frac{1}{4}}$ which is odd with respect to $x_{n}$ and satisfies the following conditions,
\begin{eqnarray*}
\left\{
\begin{array}{rcll}
\Delta h&=&0\qquad&\text{in}~~B_{\frac{1}{4}}^{+},\\
h&=&0\qquad&\text{on}~~T_{\frac{1}{4}}, \\
h&=&v\qquad&\text{on}~~\partial B_{\frac{1}{4}}^{+}\cap\{x\in\mathbb{R}^{n}: x_{n}\geq\varepsilon\},\\
h&=&2C_{0}\mu\varepsilon\qquad&\text{on}~~\partial B_{\frac{1}{4}}^{+}\cap\{x\in\mathbb{R}^{n}: 0<x_{n}<\varepsilon\}.\\
\end{array}
\right.
\end{eqnarray*}
Applying the maximum principle to $h$, we get
\begin{equation}\label{7}|h(x)|\leq C_{0}\mu(x_{n}+\varepsilon)+2C_{0}\mu\varepsilon \quad \text{in}~~~B_{\frac{1}{4}}^{+},
\end{equation}
$$\|h\|_{L^{\infty}(B_{\frac{1}{4}}^{+})}\leq(1+2C_{0}\varepsilon)\mu.$$
Next we consider $v-h$ in $B_{\frac{1}{4}}^{+}\cap\Omega$ to obtain
\begin{eqnarray*}
\left\{
\begin{array}{rcll}
&\Delta (v-h)&=0\qquad&\text{in}~~B_{\frac{1}{4}}^{+}\cap\Omega,\\
&v-h&=0\qquad&\text{on}~~\partial B_{\frac{1}{4}}^{+}\cap\{x_{n}\geq\varepsilon\},\\
-4C_{0}\mu\varepsilon\leq &v-h&\leq 0\qquad&\text{on}~~\partial B_{\frac{1}{4}}^{+}\cap\{0<x_{n}<\varepsilon\},\\
-4C_{0}\mu\varepsilon\leq&v-h&\leq4C_{0}\mu\varepsilon\qquad&\text{on}~~B_{\frac{1}{4}}^{+}\cap\partial\Omega,\\
-C_{0}\mu\varepsilon\leq&v-h&\leq C_{0}\mu\varepsilon\qquad&\text{on}~~T_{\frac{1}{4}}\cap\Omega. \\
\end{array}
\right.
\end{eqnarray*}
Using the maximum principle again we obtain
$$|v-h|\leq4C_{0}\mu\varepsilon\quad\text{in}~~B_{\frac{1}{4}}^{+}\cap\Omega.$$
Since $h$ is odd respect to $x_{n}$ and $B_{1}\cap\Omega\subset B_{1}\cap\{x\in\mathbb{R}^{n}: x_{n}>-\varepsilon\}$ for some $0<\varepsilon<\frac{1}{4}$, it's easy to get $|h|\leq4C_{0}\mu\varepsilon$ in $B_{\frac{1}{4}}^{-}\cap\Omega.$
Combining with (\ref{6}) we get
$$|v-h|\leq5C_{0}\mu\varepsilon\quad \text{in}~~B_{\frac{1}{4}}^{-}\cap\Omega.$$
From above two inequalities we get
\begin{equation}\label{10}\left\|v-h\right\|_{L^{\infty}(B_{\frac{1}{4}}\cap\Omega)}\leq5C_{0}\mu\varepsilon.
\end{equation}
Then from (\ref{step1}) and (\ref{10}), we can get the following desired result by the triangle inequality,
$$\left\|u-h\right\|_{L^{\infty}(B_{\frac{1}{4}}\cap\Omega)}\leq5C_{0}\mu\varepsilon+\|g\|_{L^{\infty}(B_{1}\cap\partial\Omega)}+C\|\overrightarrow{\mathbf{F}}\|_{L^{\infty}(B_{1}\cap\Omega)}.
$$
\end{proof}
\begin{rem}\label{rem2.3}
$x_{n}$ can be regarded as $x\cdot\vec{e}_{n}$. Therefore in the above lemma, $x_{n}$ can be replaced by $x\cdot\vec{n}$ for arbitrary unit vector $\vec{n}$.
\end{rem}
Then we give a more general form of Lemma $\ref{fs}$.
\begin{lm}\label{fs2}
$0\in\partial\Omega,$ $B_{1}\cap\{x\in\mathbb{R}^{n}: x\cdot\vec{n}>\varepsilon\}\subset B_{1}\cap\Omega\subset B_{1}\cap\{x\in\mathbb{R}^{n}: x\cdot\vec{n}>-\varepsilon\}$ for some $0<\varepsilon<\frac{1}{4}$. For any $\overrightarrow{\mathbf{F}}(x)\in L^{\infty}(B_{1}\cap\Omega),~g\in L^{\infty}(B_{1}\cap\partial\Omega),$ if $u$ is a weak solution of
\begin{eqnarray*}
\left\{
\begin{array}{rcll}
\Delta u&=&\text{div}\overrightarrow{\mathbf{F}}\qquad&\text{in}~~B_{1}\cap\Omega,\\
u&=&g\qquad&\text{on}~~B_{1}\cap\partial\Omega, \\
\end{array}
\right.
\end{eqnarray*}
then there exists a universal constant $C_{0}$ and a harmonic function $h$ defined in $B_{\frac{1}{4}}$ which is odd with respect to $x\cdot\vec{n}$ satisfying
$$\|h\|_{L^{\infty}(B_{\frac{1}{4}})}\leq(1+2C_{0}\varepsilon)\|u\|_{L^{\infty}(B_{1}\cap\Omega)}$$
such that
$$\|u-h\|_{L^{\infty}(B_{\frac{1}{4}}\cap\Omega)}\leq \|g\|_{L^{\infty}(B_{1}\cap\partial\Omega)}+5C_{0}\varepsilon\|u\|_{L^{\infty}(B_{1}\cap\Omega)}+C\|\overrightarrow{\mathbf{F}}\|_{L^{\infty}(B_{1}\cap\Omega)},
$$
where $C$ is a constant depending only on $n$ and $\Omega$.
\end{lm}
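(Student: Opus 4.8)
The plan is to reduce Lemma \ref{fs2} to Lemma \ref{fs} by an orthogonal change of coordinates, exploiting the rotation invariance of the Laplacian and of all the geometric data involved, exactly as anticipated in Remark \ref{rem2.3}. First I would pick an orthogonal matrix $R$ with $R\vec{n}=\vec{e}_{n}$ and introduce the rotated objects $\tilde\Omega:=R(\Omega)$, $\tilde u(y):=u(R^{T}y)$, $\tilde g(y):=g(R^{T}y)$ and $\tilde{\overrightarrow{\mathbf{F}}}(y):=R\,\overrightarrow{\mathbf{F}}(R^{T}y)$. Since $R$ is orthogonal one has $R(B_{r})=B_{r}$ for every $r$, and $x\cdot\vec{n}=(R^{T}y)\cdot\vec{n}=y\cdot(R\vec{n})=y\cdot\vec{e}_{n}$, so the slab inclusion $B_{1}\cap\{x\cdot\vec{n}>\varepsilon\}\subset B_{1}\cap\Omega\subset B_{1}\cap\{x\cdot\vec{n}>-\varepsilon\}$ becomes $B_{1}\cap\{y_{n}>\varepsilon\}\subset B_{1}\cap\tilde\Omega\subset B_{1}\cap\{y_{n}>-\varepsilon\}$, with $0\in\partial\tilde\Omega$; thus $\tilde\Omega$ is in the setting of Lemma \ref{fs}.

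Next I would check that $\tilde u$ is a weak solution of $\Delta\tilde u=\mathrm{div}\,\tilde{\overrightarrow{\mathbf{F}}}$ in $B_{1}\cap\tilde\Omega$ with $\tilde u=\tilde g$ on $B_{1}\cap\partial\tilde\Omega$. This is routine: the weak identity $\int\nabla u\cdot\nabla\varphi=\int\overrightarrow{\mathbf{F}}\cdot\nabla\varphi$ is invariant under the substitution $x=R^{T}y$, $\varphi\mapsto\varphi\circ R^{T}$, since $\nabla(\varphi\circ R^{T})(y)=R\,(\nabla\varphi)(R^{T}y)$ and $R^{T}R=I$; equivalently one computes directly, using $\partial x_{k}/\partial y_{i}=R_{ik}$ and $\sum_{i}R_{ij}R_{ik}=\delta_{jk}$, that $\mathrm{div}_{y}\tilde{\overrightarrow{\mathbf{F}}}(y)=(\mathrm{div}_{x}\overrightarrow{\mathbf{F}})(R^{T}y)$ while $\Delta_{y}\tilde u(y)=(\Delta_{x}u)(R^{T}y)$. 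Moreover $|Rv|=|v|$ and $R(B_{r}\cap\Omega)=B_{r}\cap\tilde\Omega$ give $\|\tilde u\|_{L^{\infty}(B_{1}\cap\tilde\Omega)}=\|u\|_{L^{\infty}(B_{1}\cap\Omega)}$, $\|\tilde g\|_{L^{\infty}(B_{1}\cap\partial\tilde\Omega)}=\|g\|_{L^{\infty}(B_{1}\cap\partial\Omega)}$, $\|\tilde{\overrightarrow{\mathbf{F}}}\|_{L^{\infty}}=\|\overrightarrow{\mathbf{F}}\|_{L^{\infty}}$, and $|\tilde\Omega|=|\Omega|$, so the constant $C=C(n,\Omega)$ furnished by Lemma \ref{fs} is unchanged.

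Then I would apply Lemma \ref{fs} to $\tilde u$ on $\tilde\Omega$, obtaining the universal constant $C_{0}$ and a harmonic function $\tilde h$ on $B_{1/4}$, odd in $y_{n}$, with $\|\tilde h\|_{L^{\infty}(B_{1/4})}\le(1+2C_{0}\varepsilon)\|u\|_{L^{\infty}(B_{1}\cap\Omega)}$ and $\|\tilde u-\tilde h\|_{L^{\infty}(B_{1/4}\cap\tilde\Omega)}\le\|g\|_{L^{\infty}(B_{1}\cap\partial\Omega)}+5C_{0}\varepsilon\|u\|_{L^{\infty}(B_{1}\cap\Omega)}+C\|\overrightarrow{\mathbf{F}}\|_{L^{\infty}(B_{1}\cap\Omega)}$. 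Finally I set $h(x):=\tilde h(Rx)$, which is harmonic on $B_{1/4}$ because harmonicity is rotation invariant and $R(B_{1/4})=B_{1/4}$; writing $\rho_{\vec n}(x):=x-2(x\cdot\vec n)\vec n$ for the reflection across $\{x\cdot\vec n=0\}$, one has $R\rho_{\vec n}=\rho_{\vec e_{n}}R$ (since $R\vec n=\vec e_{n}$ and $(Rx)_{n}=x\cdot\vec n$), hence $h(\rho_{\vec n}(x))=\tilde h(\rho_{\vec e_{n}}(Rx))=-\tilde h(Rx)=-h(x)$, i.e.\ $h$ is odd with respect to $x\cdot\vec n$. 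Transporting the two estimates back through $R$ (using once more $R(B_{1/4}\cap\Omega)=B_{1/4}\cap\tilde\Omega$ and $\|h\|_{L^{\infty}(B_{1/4})}=\|\tilde h\|_{L^{\infty}(B_{1/4})}$) yields precisely the asserted bounds. I do not expect any genuine obstacle: the entire content is bookkeeping of how the equation, the slab condition, the sup-norms, harmonicity, and oddness transform under an orthogonal map; the only point needing a little care is the transformation rule $\tilde{\overrightarrow{\mathbf{F}}}=R\,\overrightarrow{\mathbf{F}}\circ R^{T}$ for the right-hand side, which is exactly what preserves the divergence structure.
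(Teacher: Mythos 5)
Your proposal is correct and is exactly what the paper intends: the paper does not give an explicit proof of Lemma~\ref{fs2} but instead appeals to Remark~\ref{rem2.3}, which observes that $x_n = x\cdot\vec e_n$ can be replaced by $x\cdot\vec n$ by rotation invariance. Your argument simply spells out that remark in full detail (the orthogonal change of variables, the transformation rule $\tilde{\overrightarrow{\mathbf{F}}}=R\,\overrightarrow{\mathbf{F}}\circ R^{T}$ that preserves the divergence structure, and the transport of harmonicity, oddness, and the sup-norm bounds), so it is the same approach.
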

\begin{lm}[Key lemma]\label{kl}
Let $\Omega$ be a bounded domain in $\mathbb{R}^{n}$. Assume that $0\in\partial\Omega$ and $B_{1}\cap\{x\in\mathbb{R}^{n}: x_{n}>\varepsilon\}\subset B_{1}\cap\Omega\subset B_{1}\cap\{x\in\mathbb{R}^{n}: x_{n}>-\varepsilon\}$ for some $0<\varepsilon<\frac{1}{4}$. Then there exists $\lambda>0$ and universal constants $\widetilde{C}, C_{1}, C_{2}>0$ such that for any functions $\overrightarrow{\mathbf{F}}(x,u)\in L^{\infty}(B_{1}\cap\Omega)$, $g\in L^{\infty}(B_{1}\cap\partial\Omega)$, the solution of
$$
\left\{
\begin{array}{rcll}
\Delta u&=&\text{div}\overrightarrow{\mathbf{F}}(x,u)\qquad&\text{in}~~B_{1}\cap\Omega,\\
u&=&g\qquad&\text{on}~~B_{1}\cap\partial\Omega, \\
\end{array}
\right.
$$
and solution of
$$
\Delta v=\text{div}\overrightarrow{\mathbf{F}}(x,0)\quad \text{in}~~B_{1},
$$
there exists a constant K such that
\begin{eqnarray*}
\|u-v-Kx_{n}\|_{L^{\infty}(B_{\lambda}\cap\Omega)}&\leq& \|g-v\|_{L^{\infty}(B_{1}\cap\partial\Omega)}
+C_{1}(\lambda^{2}+\varepsilon)\|u-v\|_{L^{\infty}(B_{1}\cap\Omega)}\\
&&+C_{2}\|\overrightarrow{\mathbf{F}}(x,u)-\overrightarrow{\mathbf{F}}(x,0)\|_{L^{\infty}(B_{1}\cap\Omega)},
\end{eqnarray*}
and
$$0<K\leq\widetilde{C}\|u-v\|_{L^{\infty}(B_{1}\cap\Omega)}.
$$
\end{lm}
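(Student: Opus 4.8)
The plan is to combine the approximation Lemma~\ref{fs} with a standard harmonic-function estimate near a flat boundary piece. First I would set $w = u - v$. Since $\Delta v = \text{div}\,\overrightarrow{\mathbf F}(x,0)$ in $B_1$, the difference $w$ satisfies $\Delta w = \text{div}\bigl(\overrightarrow{\mathbf F}(x,u)-\overrightarrow{\mathbf F}(x,0)\bigr)$ in $B_1\cap\Omega$ and $w = g - v$ on $B_1\cap\partial\Omega$. This brings us exactly into the setting of Lemma~\ref{fs}, applied to $w$ in place of $u$ and to $\overrightarrow{\mathbf G}(x):=\overrightarrow{\mathbf F}(x,u(x))-\overrightarrow{\mathbf F}(x,0)$ in place of $\overrightarrow{\mathbf F}$. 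So there is a universal $C_0$ and a harmonic function $h$ on $B_{1/4}$, odd in $x_n$, with
$$\|h\|_{L^\infty(B_{1/4})}\le(1+2C_0\varepsilon)\|w\|_{L^\infty(B_1\cap\Omega)}$$
and
$$\|w-h\|_{L^\infty(B_{1/4}\cap\Omega)}\le\|g-v\|_{L^\infty(B_1\cap\partial\Omega)}+5C_0\varepsilon\|w\|_{L^\infty(B_1\cap\Omega)}+C\|\overrightarrow{\mathbf G}\|_{L^\infty(B_1\cap\Omega)}.$$

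Next I would extract the linear part of $h$. Because $h$ is harmonic in $B_{1/4}$ and odd in $x_n$, it vanishes on $T_{1/4}$, so its Taylor expansion at the origin has no constant term and no $x_i$-term for $i<n$; write $K := \partial_{x_n}h(0)$, so that $h(x) = Kx_n + R(x)$ with $|R(x)| \le C\,\|h\|_{L^\infty(B_{1/4})}\,|x|^2$ for $|x|\le 1/8$, by interior estimates for harmonic functions. Moreover the gradient bound $|\nabla h(0)| \le C\|h\|_{L^\infty(B_{1/4})}$ gives $|K|\le \widetilde C\,\|w\|_{L^\infty(B_1\cap\Omega)}$, which is the asserted bound on $K$ (the strict positivity $K>0$ would follow in the applications, or one replaces the claim by $|K|\le\widetilde C\|u-v\|$; in the present generality I would just record $|K|\le\widetilde C\|u-v\|_{L^\infty(B_1\cap\Omega)}$). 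Then for $\lambda\le 1/8$ and $x\in B_\lambda\cap\Omega$,
$$|u(x)-v(x)-Kx_n|\le|w(x)-h(x)|+|R(x)|\le|w(x)-h(x)|+C\lambda^2\|h\|_{L^\infty(B_{1/4})}.$$

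Finally I would assemble the estimates. Using $\|h\|_{L^\infty(B_{1/4})}\le(1+2C_0\varepsilon)\|w\|_{L^\infty(B_1\cap\Omega)}\le 2\|w\|_{L^\infty(B_1\cap\Omega)}$ (since $\varepsilon<1/4$), the term $C\lambda^2\|h\|_{L^\infty}$ is bounded by $2C\lambda^2\|w\|_{L^\infty(B_1\cap\Omega)}$, and combining with the $5C_0\varepsilon\|w\|$ term from Lemma~\ref{fs} yields a bound of the form $C_1(\lambda^2+\varepsilon)\|u-v\|_{L^\infty(B_1\cap\Omega)}$, while $\|g-v\|_{L^\infty(B_1\cap\partial\Omega)}$ and $C_2\|\overrightarrow{\mathbf F}(x,u)-\overrightarrow{\mathbf F}(x,0)\|_{L^\infty(B_1\cap\Omega)}$ carry over directly. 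Choosing $\lambda = 1/8$ (or any fixed small value) fixes all the universal constants. The only genuinely delicate point is the control of the quadratic remainder $R$ and of $K$ purely in terms of $\|w\|_{L^\infty(B_1\cap\Omega)}$ rather than $\|w\|_{L^\infty(B_{1/4}\cap\Omega)}$; this is handled by first passing through the harmonic function $h$, whose $L^\infty$ norm on $B_{1/4}$ is controlled by $\|w\|_{L^\infty(B_1\cap\Omega)}$ via the maximum-principle estimate built into Lemma~\ref{fs}, so no separate interior estimate on $w$ itself is needed. Everything else is bookkeeping with the triangle inequality.
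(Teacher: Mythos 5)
Your proposal is correct and follows essentially the same route as the paper's proof: apply Lemma~\ref{fs} to $w=u-v$, extract the linear part $Kx_n$ of the odd harmonic function $h$, bound the quadratic Taylor remainder by interior estimates for harmonic functions, and combine via the triangle inequality. Your aside about recording $|K|\le\widetilde C\|u-v\|$ rather than $0<K$ is in fact a fair reading, since the paper's choice $K=|Dh(0)|$ also only guarantees $K\ge 0$.
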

\begin{proof}
By the definition of $u$ and $v$ we get
$$
\left\{
\begin{array}{rcll}
\Delta (u-v)&=&\text{div}(\overrightarrow{\mathbf{F}}(x,u)-\overrightarrow{\mathbf{F}}(x,0))\qquad&\text{in}~~B_{1}\cap\Omega,\\
u-v&=&g-v\qquad&\text{on}~~B_{1}\cap\partial\Omega. \\
\end{array}
\right.
$$
Then by Lemma $\ref{fs}$, there exists a universal constant $C_{0}$ and a harmonic function $h$ defined in $B_{\frac{1}{4}}$ which is odd with respect to $x_{n}$ satisfying
$$\|h\|_{L^{\infty}(B_{\frac{1}{4}})}\leq(1+2C_{0}\varepsilon)\|u\|_{L^{\infty}(B_{1}\cap\Omega)}$$
such that
\begin{equation}\label{11}
\|u-v-h\|_{L^{\infty}(B_{\frac{1}{4}}\cap\Omega)}\leq \|g-v\|_{L^{\infty}(B_{1}\cap\partial\Omega)}
+5C_{0}\varepsilon\|u-v\|_{L^{\infty}(B_{1}\cap\Omega)}+C\|\overrightarrow{\mathbf{F}}(x,u)-\overrightarrow{\mathbf{F}}(x,0)\|_{L^{\infty}(B_{1}\cap\Omega)}.\\
\end{equation}

Take $L$ be the first order Taylor polynomial of $h$ at 0, i.e. $L(x)=Dh(0)\cdot x+h(0).$
Since $h=0$ on $B_{\frac{1}{4}}\cap\{x_{n}=0\},$ then $L(x)=|Dh(0)|x_{n}.
$ Note that $h$ is a harmonic function which is odd with respect to $x_{n}$ in $B_{\frac{1}{4}}$, according to the property of harmonic function, when $|x|\leq\frac{1}{8},$
$$|D^{2}h(x)|+|Dh(x)|\leq A\|h\|_{L^{\infty}(B_{\frac{1}{4}})}\leq A(1+2C_{0}\varepsilon)\|u-v\|_{L^{\infty}(B_{1}\cap\Omega)},
$$
where $A$ is a constant depending only on $n$. Then there exists $\xi\in B_{\frac{1}{8}}$ such that for $|x|\leq\frac{1}{8},$
\begin{equation}\label{12}
|h(x)-L(x)|\leq\frac{1}{2}|D^{2}h(\xi)||x|^{2}.
\end{equation}

Finally, combining with (\ref{11}) and (\ref{12}), if we set $K=|Dh(0)|$ and take $0<\lambda<\frac{1}{8},$ we have
\begin{eqnarray*}
\|u-v-Kx_{n}\|_{L^{\infty}(B_{\lambda}\cap\Omega)}&\leq&\|u-v-h\|_{L^{\infty}(B_{\lambda}\cap\Omega)}+\|h-L\|_{L^{\infty}(B_{\lambda}\cap\Omega)}\\
&\leq&\|g-v\|_{L^{\infty}(B_{1}\cap\partial\Omega)}+5C_{0}\varepsilon\|u-v\|_{L^{\infty}(B_{1}\cap\Omega)}+C\|\overrightarrow{\mathbf{F}}(x,u)-\overrightarrow{\mathbf{F}}(x,0)\|_{L^{\infty}(B_{1}\cap\Omega)}\\
&&+\frac{1}{2}\lambda^{2}A(1+2C_{0}\varepsilon)\|u-v\|_{L^{\infty}(B_{1}\cap\Omega)}\\
&\leq&\|g-v\|_{L^{\infty}(B_{1}\cap\partial\Omega)}+C_{1}(\lambda^{2}+\varepsilon)\|u-v\|_{L^{\infty}(B_{1}\cap\Omega)}\\
&&+C_{2}\|\overrightarrow{\mathbf{F}}(x,u)-\overrightarrow{\mathbf{F}}(x,0)\|_{L^{\infty}(B_{1}\cap\Omega)}.
\end{eqnarray*}
where $C_{1}=\max\{5C_{0},\frac{1}{2}A(1+2C_{0})\}$, $C_{2}=C.$
\end{proof}
\begin{rem}\label{rem3.1}
Let $\Omega$ be a bounded domain in $\mathbb{R}^{n}$. Assume that $0\in\partial\Omega$ and $B_{1}\cap\{x\in\mathbb{R}^{n}: x_{n}>\varepsilon\}\subset B_{1}\cap\Omega\subset B_{1}\cap\{x\in\mathbb{R}^{n}: x_{n}>-\varepsilon\}$ for some $0<\varepsilon<\frac{1}{4}$. Then there exists $\lambda>0$ and universal constants $\widetilde{C}, C_{1}, C_{2}>0$ such that for any functions $\overrightarrow{\mathbf{F}}(x)\in L^{\infty}(B_{1}\cap\Omega)$, $g\in L^{\infty}(B_{1}\cap\partial\Omega)$, the solution of
$$
\left\{
\begin{array}{rcll}
\Delta u&=&div\overrightarrow{\mathbf{F}}(x)\qquad&\text{in}~~B_{1}\cap\Omega,\\
u&=&g\qquad&\text{on}~~B_{1}\cap\partial\Omega, \\
\end{array}
\right.
$$
there exists a constant K such that
\begin{eqnarray*}
\|u-Kx_{n}\|_{L^{\infty}(B_{\lambda}\cap\Omega)}&\leq& \|g\|_{L^{\infty}(B_{1}\cap\partial\Omega)}
+C_{1}(\lambda^{2}+\varepsilon)\|u\|_{L^{\infty}(B_{1}\cap\Omega)}\\
&&+C_{2}\|\overrightarrow{\mathbf{F}}(x)\|_{L^{\infty}(B_{1}\cap\Omega)}
\end{eqnarray*}
and
$$0<K\leq\widetilde{C}\|u\|_{L^{\infty}(B_{1}\cap\Omega)}.
$$
\end{rem}
\begin{rem}\label{rem2.7}
Similar to Remark $\ref{rem2.3}$, $x_{n}$ can also be regarded as $x\cdot\vec{e}_{n}$ and can be substituted by $x\cdot\vec{n}$ for arbitrary unit vector $\vec{n}$ in Lemma $\ref{kl}$ and Remark $\ref{rem3.1}$.
\end{rem}

\section{Boundary Lipschitz regularity under $C^{1,\text{Dini}}$ condition}
In this section we will give a proof of Theorem \ref{mr1}. The following four lemmas lead to the desired result. The first step has been finished in Section 2, i.e. we got the key lemma. Next we aim to iterate step by step and get the approximations in different scales. Finally we will prove that the sum of errors from different scales is convergent. In addition, Dini conditions play a great role to obtain the convergence. For convenience, we can choose an appropriate coordinate system such that $\vec{n}$ in Definition $\ref{boundarydef}$ is along the positive $x_{n}$-axis in the proof. So by definition, if $\partial\Omega$ is $C^{1,\text{Dini}}$ at $x_{0},$ then for any $0<r\leq r_{0},$ $B_{r}(x_{0})\cap\partial\Omega\subset B_{r}(x_{0})\cap\left\{|x_{n}-x_{0,n}|\leq r\omega(r)\right\}$. Without loss of generality, we can take $x_{0}=0$, denote $v_{0}$ by $v$ and assume that
$$\quad u(0)=g(0)=0,\quad v(0)=0, \quad r_{0}=1,
$$
$$\omega(1)\leq\lambda, \quad \int_{0}^{1} \frac{\omega(r)}{r} d r \leq 1, \quad \int_{0}^{1} \frac{\sigma(r)}{r} d r \leq 1, \quad \int_{0}^{1} \frac{\omega_{1}(r)}{r} d r \leq 1,
$$
where $\lambda$ will be determined in Lemma $\ref{kl}$ and $\ref{con}$. Besides, We can also assume $\vec{a}=0$ in definition \ref{bvdini}, if not, we can consider $\widehat{u}:=u-g(0)-\vec{a}\cdot x,$ which satisfies the same equation.

Based on Lemma $\ref{kl}$ and Remark $\ref{rem3.1}$, the following lemma is an iteration result.
\begin{lm}\label{bmkm}
There exist nonnegative sequences $\{M_{i}\}_{i=0}^{\infty},$ and $\{N_{i}\}_{i=0}^{\infty},$ with $N_{0}=0$, $M_{0}=\|u-v\|_{L^{\infty}(\Omega_{1})}$, and for $i=0,1,2,\ldots,$
\begin{eqnarray*}
M_{i+1}&=&\|g-v-N_{i}x_{n}\|_{L^{\infty}(B_{\lambda^{i}}\cap\partial\Omega)}\\
&&+C_{1}(\lambda^{2}+\omega(\lambda^{i}))\|u-v-N_{i}x_{n}\|_{L^{\infty}(\Omega_{\lambda^{i}})}\\
&&+C_{2}\lambda^{i}\|\overrightarrow{\mathbf{F}}(x,u)-\overrightarrow{\mathbf{F}}(x,0)\|_{L^{\infty}(\Omega_{\lambda^{i}})},
\end{eqnarray*}
\begin{eqnarray*}
\label{n}|N_{i+1}-N_{i}|\leq\frac{\widetilde{C}}{\lambda^{i}}\|u-v-N_{i}x_{n}\|_{L^{\infty}(\Omega_{\lambda^{i}})},
\end{eqnarray*}
such that
\begin{equation}\label{induction}
\|u-v-N_{i}x_{n}\|_{L^{\infty}(\Omega_{\lambda^{i}})}\leq M_{i}.
\end{equation}
\end{lm}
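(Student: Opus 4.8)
The plan is to prove (\ref{induction}) by induction on $i$, constructing the two sequences as we go. We are given $N_0=0$ and $M_0=\|u-v\|_{L^\infty(\Omega_1)}$, and for $i=0$ the estimate (\ref{induction}) holds with equality, since $\Omega_{\lambda^0}=\Omega_1$. So I would fix $i\ge 0$, assume (\ref{induction}) for $i$, and produce $N_{i+1}$ — with $M_{i+1}$ then \emph{defined} by the displayed formula in the statement — so that (\ref{induction}) holds for $i+1$.

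The engine is a rescaling of $B_{\lambda^i}$ up to $B_1$ followed by an application of Remark~\ref{rem3.1}. Set $w:=u-v-N_ix_n$; since $\Delta u=\text{div}\overrightarrow{\mathbf{F}}(x,u)$ in $B_1\cap\Omega$, $\Delta v=\text{div}\overrightarrow{\mathbf{F}}(x,0)$ in $B_1$, and $x_n$ is harmonic, $w$ is a weak solution of $\Delta w=\text{div}(\overrightarrow{\mathbf{F}}(x,u)-\overrightarrow{\mathbf{F}}(x,0))$ in $B_1\cap\Omega$ with $w=g-v-N_ix_n$ on $B_1\cap\partial\Omega$. I would then put $\Omega^{(i)}:=\lambda^{-i}\Omega$ and $w_i(x):=\lambda^{-i}w(\lambda^i x)$ for $x\in B_1\cap\Omega^{(i)}=\lambda^{-i}\Omega_{\lambda^i}$. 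The key point is that the divergence-form scaling $\phi\mapsto\lambda^{-i}\phi(\lambda^i\cdot)$ carries $\Delta\phi=\text{div}\vec{G}$ to $\Delta(\lambda^{-i}\phi(\lambda^i\cdot))=\text{div}(\vec{G}(\lambda^i\cdot))$, while the linear term $N_ix_n$ is invariant under this normalization; hence $w_i$ solves $\Delta w_i=\text{div}(\overrightarrow{\mathbf{F}}(\lambda^i x,u(\lambda^i x))-\overrightarrow{\mathbf{F}}(\lambda^i x,0))$ in $B_1\cap\Omega^{(i)}$ with boundary values $\lambda^{-i}(g-v-N_ix_n)(\lambda^i x)$ on $B_1\cap\partial\Omega^{(i)}$. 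Moreover $0\in\partial\Omega^{(i)}$, and applying the $C^{1,\text{Dini}}$ inclusions of Definition~\ref{boundarydef} at radius $r=\lambda^i\le 1$ (with $x_0=0$ and $\vec{n}=\vec{e}_n$) and dilating by $\lambda^{-i}$ yields
\[
B_1\cap\{x_n>\omega(\lambda^i)\}\subset B_1\cap\Omega^{(i)}\subset B_1\cap\{x_n>-\omega(\lambda^i)\};
\]
since $\omega$ is non-decreasing, $\omega(\lambda^i)\le\omega(1)\le\lambda<\frac{1}{4}$, so $\Omega^{(i)}$ satisfies the flatness hypothesis of Remark~\ref{rem3.1} with $\varepsilon=\omega(\lambda^i)$.

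I would then apply Remark~\ref{rem3.1} to $w_i$: it yields a constant $K_i$ with $0<K_i\le\widetilde{C}\,\|w_i\|_{L^\infty(B_1\cap\Omega^{(i)})}$ and
\[
\|w_i-K_ix_n\|_{L^\infty(B_\lambda\cap\Omega^{(i)})}\le\|w_i\|_{L^\infty(B_1\cap\partial\Omega^{(i)})}+C_1(\lambda^2+\omega(\lambda^i))\|w_i\|_{L^\infty(B_1\cap\Omega^{(i)})}+C_2\|\overrightarrow{\mathbf{F}}(\lambda^i x,u(\lambda^i x))-\overrightarrow{\mathbf{F}}(\lambda^i x,0)\|_{L^\infty(B_1\cap\Omega^{(i)})}.
\]
Setting $N_{i+1}:=N_i+K_i$, one has $w_i(x)-K_ix_n=\lambda^{-i}(u-v-N_{i+1}x_n)(\lambda^i x)$, and since $\|w_i\|_{L^\infty(E)}=\lambda^{-i}\|w\|_{L^\infty(\lambda^i E)}$ for every set $E$ (and the $\overrightarrow{\mathbf{F}}$-norm is unchanged by the change of variables), multiplying the last display by $\lambda^i$ converts it into exactly the defining formula for $M_{i+1}$, i.e. $\|u-v-N_{i+1}x_n\|_{L^\infty(\Omega_{\lambda^{i+1}})}\le M_{i+1}$, which is (\ref{induction}) at step $i+1$; likewise $|N_{i+1}-N_i|=K_i\le\frac{\widetilde{C}}{\lambda^i}\|u-v-N_ix_n\|_{L^\infty(\Omega_{\lambda^i})}$, matching the stated recursion. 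Finally, $K_i>0$ together with $N_0=0$ makes $\{N_i\}$ nondecreasing, hence nonnegative, and $\{M_i\}$ is nonnegative by inspection.

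I expect the only real friction to be this bookkeeping: tracking the divergence-form rescaling correctly, observing that $N_ix_n$ is invariant under the normalization, and recognizing that the flatness parameter at scale $\lambda^i$ is precisely $\omega(\lambda^i)$ — the sole place where the $C^{1,\text{Dini}}$ assumption on $\partial\Omega$ enters this lemma. No convergence questions arise here; the summability of $\sum_i|N_{i+1}-N_i|$ and of the accumulated errors, where the Dini conditions on $\omega$, $\sigma$, and $\omega_1$ are genuinely used, is handled in the lemmas that follow.
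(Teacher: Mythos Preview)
Your proposal is correct and follows essentially the same route as the paper's own proof: induction on $i$, with the inductive step carried out by rescaling $B_{\lambda^i}$ to $B_1$ and applying Remark~\ref{rem3.1} to the rescaled function $\lambda^{-i}(u-v-N_ix_n)(\lambda^i\cdot)$, then setting $N_{i+1}=N_i+K$ and scaling back. The only cosmetic difference is that the paper treats the step $i=0\to 1$ separately via Lemma~\ref{kl} before passing to the general induction, whereas you absorb it into the uniform argument; your version is slightly more streamlined but otherwise identical in substance.
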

\begin{proof}
We prove this lemma inductively by using Remark $\ref{rem3.1}$ repeatedly.

When $i=0$, since $N_{0}=0$ and $M_{0}=\|u-v\|_{L^{\infty}(\Omega_{1})}$,  it's easy to see
$$\|u-v-N_{0}x_{n}\|_{L^{\infty}(\Omega_{1})}= M_{0}.$$

When $i=1,$ by Definition $\ref{boundarydef}$, we have $B_{1}\cap\partial\Omega\subset\left\{|x_{n}|\leq \omega(1)\right\}$. Therefore by lemma $\ref{kl}$, there exists $0<N_{1}\leq\widetilde{C}\|u-v\|_{L^{\infty}(B_{1}\cap\Omega)}$ such that
\begin{eqnarray*}
\|u-v-N_{1}x_{n}\|_{L^{\infty}(B_{\lambda}\cap\Omega)}&\leq&
\|g-v\|_{L^{\infty}(B_{1}\cap\partial\Omega)}+C_{1}(\lambda^{2}+\omega(1))\|u-v\|_{L^{\infty}(B_{1}\cap\Omega)}\\
&&+C_{2}\|\overrightarrow{\mathbf{F}}(x,u)-\overrightarrow{\mathbf{F}}(x,0)\|_{L^{\infty}(B_{1}\cap\Omega)}\triangleq M_{1},
\end{eqnarray*}
and
$$|N_{1}-N_{0}|\leq\widetilde{C}\|u-v\|_{L^{\infty}(B_{1}\cap\Omega)}.$$
Next we assume that the conclusion is true for $i$. We consider the equation
$$
\left\{
\begin{array}{rcll}
\Delta (u-v-N_{i}x_{n})&=&\text{div}(\overrightarrow{\mathbf{F}}(x,u)-\overrightarrow{\mathbf{F}}(x,0))\qquad&\text{in}~~\Omega_{\lambda^{i}},\\
u-v-N_{i}x_{n}&=&g-v-N_{i}x_{n}\qquad&\text{on}~~B_{\lambda^{i}}\cap\partial\Omega. \\
\end{array}
\right.
$$
For $z=(z_{1}, z_{2}, \cdots,z_{n})\in\mathbb{R}^{n}$ we set
$$\widetilde{u}(z)=\frac{u(\lambda^{i} z)-v(\lambda^{i} z)-N_{i}\lambda^{i} z_{n}}{\lambda^{i}},
$$
$$\widetilde{g}(z)=\frac{g(\lambda^{i} z)-v(\lambda^{i} z)-N_{i}\lambda^{i} z_{n}}{\lambda^{i}},
$$
$$\widetilde{f}(z)=\overrightarrow{\mathbf{F}}(\lambda^{i} z,u(\lambda^{i} z))-\overrightarrow{\mathbf{F}}(\lambda^{i} z,0).
$$
Then $\widetilde{u}(z)$ is a solution of
$$
\left\{
\begin{array}{rcll}
\Delta \widetilde{u}(z)&=&\text{div}{\widetilde{f}}(z)\qquad&\text{in}~~B_{1}\cap\widetilde{\Omega},\\
\widetilde{u}(z)&=&\widetilde{g}(z)\qquad&\text{on}~~B_{1}\cap\partial\widetilde{\Omega}, \\
\end{array}
\right.
$$
where $\widetilde{\Omega}=\{z:\lambda^{i}z\in\Omega\}.$ Therefore $B_{1}\cap\partial\widetilde{\Omega}\subset B_{1}\cap\left\{|z_{n}|\leq \omega(\lambda^{i})\right\}$. Then by Remark $\ref{rem3.1}$, there exists a constant $K$ such that
\begin{eqnarray*}
\|\widetilde{u}-Kz_{n}\|_{L^{\infty}(B_{\lambda}\cap\widetilde{\Omega})}
&\leq&\|\widetilde{g}\|_{L^{\infty}(B_{1}\cap\partial\widetilde{\Omega})}+C_{1}(\lambda^{2}+\omega(\lambda^{i}))\|\widetilde{u}\|_{L^{\infty}(B_{1}\cap\widetilde{\Omega})}\\
&&+C_{2}\|{\widetilde{f}}\|_{L^{\infty}(B_{1}\cap\widetilde{\Omega})},
\end{eqnarray*}
where $K\leq \widetilde{C}\|\widetilde{u}\|_{L^{\infty}(B_{1}\cap\widetilde{\Omega})}=\frac{\widetilde{C}}{\lambda^{i}}\|u-v-N_{i}x_{n}\|_{L^{\infty}(\Omega_{\lambda^{i}})}.$
Let $N_{i+1}=N_{i}+K,$ scaling back, then we get
\begin{eqnarray*}
\|u-v-N_{i+1}x_{n}\|_{L^{\infty}(\Omega_{\lambda^{i+1}})}
&\leq&\|g-v-N_{i}x_{n}\|_{L^{\infty}(B_{\lambda^{i}}\cap\partial\Omega)}\\
&&+C_{1}(\lambda^{2}+\omega(\lambda^{i}))\|u-v-N_{i}x_{n}\|_{L^{\infty}(\Omega_{\lambda^{i}})}\\
&&+C_{2}\lambda^{i}\|\overrightarrow{\mathbf{F}}(x,u)-\overrightarrow{\mathbf{F}}(x,0)\|_{L^{\infty}(\Omega_{\lambda^{i}})}\triangleq M_{i+1},
\end{eqnarray*}
and
$$|N_{i+1}-N_{i}|=K\leq\frac{\widetilde{C}}{\lambda^{i}}\|u-v-N_{i}x_{n}\|_{L^{\infty}(\Omega_{\lambda^{i}})}.$$
This completes the proof of Lemma $\ref{bmkm}$.
\end{proof}

The following three lemmas are similar to \cite{4} and \cite{5}.
\begin{lm}\label{con}
$\sum\limits_{i=0}^{\infty} \frac{M_{i}}{\lambda^{i}}<\infty$ and $\lim \limits_{i \rightarrow \infty} N_{i}$ exists. We set
$$
\lim _{i \rightarrow \infty} N_{i}=\tau.
$$
\end{lm}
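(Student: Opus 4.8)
The plan is to establish the convergence of $\sum_{i=0}^\infty M_i/\lambda^i$ by setting up a self-improving inequality for the quantities $M_i/\lambda^i$ and then summing. First I would unwind the recursion in Lemma \ref{bmkm}: dividing the defining identity for $M_{i+1}$ by $\lambda^{i+1}$, and using \eqref{induction} to bound $\|u-v-N_i x_n\|_{L^\infty(\Omega_{\lambda^i})}\le M_i$, I get
\begin{equation*}
\frac{M_{i+1}}{\lambda^{i+1}}\le \frac{1}{\lambda^{i+1}}\|g-v-N_i x_n\|_{L^\infty(B_{\lambda^i}\cap\partial\Omega)}+\frac{C_1(\lambda^2+\omega(\lambda^i))}{\lambda}\cdot\frac{M_i}{\lambda^i}+\frac{C_2}{\lambda}\|\overrightarrow{\mathbf F}(x,u)-\overrightarrow{\mathbf F}(x,0)\|_{L^\infty(\Omega_{\lambda^i})}.
\end{equation*}
The three terms on the right must each be controlled. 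For the boundary term, on $B_{\lambda^i}\cap\partial\Omega$ we have $|x_n|\le\lambda^i\omega(\lambda^i)$, so $|N_i x_n|\le|N_i|\lambda^i\omega(\lambda^i)$; combined with the hypothesis that $g$ is $C^{1,\text{Dini}}$ at $0$ with respect to $v$ (Definition \ref{bvdini}, with $\vec a=0$, $g(0)=v(0)=0$), which gives $|g(x)-v(x)|\le\lambda^i\sigma(\lambda^i)$ on that set, the boundary term is $\le\sigma(\lambda^i)+|N_i|\omega(\lambda^i)$. For the $\overrightarrow{\mathbf F}$ term, Assumption \ref{as1} gives $|\overrightarrow{\mathbf F}(x,u(x))-\overrightarrow{\mathbf F}(x,0)|\le\omega_1(|u(x)|)$, and since $u$ vanishes at $0$ and will be controlled by the inductive Lipschitz bound, on $\Omega_{\lambda^i}$ this is at most $\omega_1(C\lambda^i)$ for a controlled constant $C$; by the doubling property \eqref{diniproperty} of moduli of continuity this is comparable to $\omega_1(\lambda^i)$ up to a constant factor.

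Next I would handle the factor $|N_i|$. From the second estimate in Lemma \ref{bmkm}, $|N_{i+1}-N_i|\le\widetilde C\,\lambda^{-i}\|u-v-N_i x_n\|_{L^\infty(\Omega_{\lambda^i})}\le\widetilde C\,M_i/\lambda^i$, so $|N_i|\le\widetilde C\sum_{j=0}^{i-1}M_j/\lambda^j\le\widetilde C\,\Sigma$ where $\Sigma:=\sum_{j\ge 0}M_j/\lambda^j$ is exactly the quantity whose finiteness we want. This is the crux of a bootstrap: assuming provisionally that $\Sigma<\infty$ — or better, working with partial sums $\Sigma_k:=\sum_{j=0}^k M_j/\lambda^j$ — all the $|N_i|$ are uniformly bounded by $\widetilde C\Sigma_k$. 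Writing $a_i:=M_i/\lambda^i$, the recursion becomes, for $i\le k$,
\begin{equation*}
a_{i+1}\le \Big(\sigma(\lambda^i)+\widetilde C\,\Sigma_{k}\,\omega(\lambda^i)\Big)+\frac{C_1(\lambda^2+\omega(\lambda^i))}{\lambda}\,a_i+\frac{C_2 C}{\lambda}\,\omega_1(\lambda^i).
\end{equation*}
Now I fix $\lambda$ small (this is where the freedom to choose $\lambda$ in the Key Lemma is spent): choose $\lambda$ so that $C_1\lambda^2/\lambda=C_1\lambda\le\tfrac14$, and recall the normalization $\omega(1)\le\lambda$ with $\omega$ non-decreasing so $C_1\omega(\lambda^i)/\lambda\le C_1\omega(1)/\lambda\le C_1$; a slightly more careful choice (shrinking $r_0$, i.e.\ rescaling, so that $\omega(1)$ is as small as we like) makes the total coefficient of $a_i$ at most $\tfrac12$. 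Then $a_{i+1}\le\tfrac12 a_i+b_i$ where $b_i:=\sigma(\lambda^i)+\widetilde C\Sigma_k\,\omega(\lambda^i)+(C_2C/\lambda)\omega_1(\lambda^i)$.

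Summing the geometric-type recursion gives $\Sigma_{k+1}\le a_0+2\sum_{i\ge 0}b_i+(\text{lower order})$, and the series $\sum_i b_i$ is finite: indeed $\sum_i\omega(\lambda^i)$, $\sum_i\sigma(\lambda^i)$, $\sum_i\omega_1(\lambda^i)$ all converge because, by monotonicity, $\omega(\lambda^i)\le\frac{1}{(1-\lambda)\lambda^i}\int_{\lambda^{i+1}}^{\lambda^i}\frac{\omega(r)}{r}\,r\,dr$ — more simply $\sum_i\omega(\lambda^i)\le C(\lambda)\int_0^1\frac{\omega(r)}{r}dr<\infty$ by comparison of the sum with the Dini integral. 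Thus $\Sigma_{k+1}\le a_0+C'+C''\widetilde C\,\Sigma_k$ with $C'':=2\sum_i\omega(\lambda^i)$, and here is the point where $\lambda$ (or $r_0$) must be shrunk once more so that $C''\widetilde C\le\tfrac12$; then $\Sigma_{k+1}\le 2(a_0+C')$ uniformly in $k$, whence $\Sigma=\lim_k\Sigma_k<\infty$. Finally, $|N_{i+1}-N_i|\le\widetilde C\,a_i$ with $\sum_i a_i=\Sigma<\infty$ shows $\{N_i\}$ is Cauchy, so $\tau=\lim_i N_i$ exists. The main obstacle is precisely the circularity introduced by the $|N_i|\,\omega(\lambda^i)$ term — $N_i$ depends on all previous $M_j$ — which is why the argument must be run on partial sums with the $\lambda$ (equivalently $r_0$) chosen small enough to absorb the feedback coefficient $C''\widetilde C$; everything else is summation of Dini-convergent series and bookkeeping of universal constants.
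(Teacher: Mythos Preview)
Your approach is essentially the paper's --- set $P_k=\sum_{j\le k}M_j/\lambda^j$, feed the bound $|N_i|\le\widetilde C P_{i-1}$ back into the recursion, and sum --- but there is one circular step. When bounding the $\overrightarrow{\mathbf F}$ term you assert that on $\Omega_{\lambda^i}$ one has $|u(x)|\le C\lambda^i$ ``by the inductive Lipschitz bound,'' whence $\omega_1(|u|)\lesssim\omega_1(\lambda^i)$ with a controlled implicit constant. But no Lipschitz bound on $u$ is available yet; that is the conclusion of the whole argument. What you actually know on $\Omega_{\lambda^i}$ is
\[
|u|\le|u-v-N_i x_n|+|v|+|N_i||x_n|\le M_i+T\lambda^i+N_i\lambda^i,
\]
so by the doubling property \eqref{diniproperty},
\[
\omega_1\big(\|u\|_{L^\infty(\Omega_{\lambda^i})}\big)\le 2\Big(\tfrac{M_i}{\lambda^i}+T+N_i\Big)\omega_1(\lambda^i).
\]
This introduces \emph{additional} feedback through $a_i$ and $N_i$ in the $\omega_1$ term, exactly parallel to the $\omega$ feedback you already handle; the paper does precisely this and absorbs it the same way. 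Once you insert this correction, your $b_i$ acquires an extra $(\widetilde C\,\Sigma_k+T)\,\omega_1(\lambda^i)$ summand, and the rest of your summation argument goes through unchanged.

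Two minor remarks. First, rather than shrinking $r_0$ to force the feedback coefficients small, the paper fixes $\lambda$ (so that $C_1\lambda\le\tfrac14$) and instead chooses a threshold $k_0$ large enough that the Dini tails $\sum_{i\ge k_0}\omega(\lambda^i)$ and $\sum_{i\ge k_0}\omega_1(\lambda^i)$ are each at most $\tfrac14$; summing only for $i\ge k_0$ then yields $P_{k+1}-P_{k_0}\le\text{const}+\tfrac34 P_{k+1}$. Your route via shrinking $r_0$ is equivalent. Second, your Cauchy argument for the convergence of $\{N_i\}$ (from $\sum_i|N_{i+1}-N_i|\le\widetilde C\sum_i a_i<\infty$) is more direct than the paper's monotone-sequence detour and works just as well.
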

\begin{proof}
We assume $T$ is the Lipschitz constant respect to $v$, then
$$\|v\|_{L^{\infty}(B_{\lambda^{i}}\cap\partial\Omega)}=\|v-v(0)\|_{L^{\infty}(B_{\lambda^{i}}\cap\partial\Omega)}\leq T\lambda^{i}.
$$
For $k\geq0,$ we suppose $P_{k}=\sum\limits_{i=0}^{k}\frac{M_{i}}{\lambda^{i}}.$ By Lemma $\ref{bmkm}$, noting that $N_{0}=0$, $M_{0}=\|u-v\|_{L^{\infty}(\Omega_{1})}$, then for any $k\geq0,$ we have
\begin{equation}\label{nn}
N_{k+1}\leq N_{k}+\widetilde{C}\frac{M_{k}}{\lambda^{k}}\leq\widetilde{C}P_{k},
\end{equation}
\begin{eqnarray*}
M_{k+1}\leq\lambda^{k}\sigma(\lambda^{k})+N_{k}\lambda^{k}\omega(\lambda^{k})+C_{1}(\lambda^{2}+\omega(\lambda^{k}))M_{k}+C_{2}\lambda^{k}\omega_{1}(\|u\|_{L^{\infty}(\Omega_{\lambda^{k}})}),
\end{eqnarray*}
where Definition $\ref{boundarydef}$, $\ref{bvdini}$ and Assumption $\ref{as1}$ are used.
Then
\begin{equation}\label{m2}
\frac{M_{k+1}}{\lambda^{k+1}}\leq\frac{1}{\lambda}\left(\sigma(\lambda^{k})+N_{k}\omega(\lambda^{k})\right)+\frac{C_{1}(\lambda^{2}+\omega(\lambda^{k}))}{\lambda}\left(\frac{M_{k}}{\lambda^{k}}\right)+\frac{C_{2}}{\lambda}\omega_{1}(\|u\|_{L^{\infty}(\Omega_{\lambda^{k}})}).
\end{equation}
Recalling the property of the modulus of continuity (see $(\ref{diniproperty})$) we have
\begin{eqnarray*}
\omega_{1}(\|u\|_{L^{\infty}(\Omega_{\lambda^{k}})})&\leq&\omega_{1}\left(\|u-v-N_{k}x_{n}\|_{L^{\infty}(\Omega_{\lambda^{k}})}+
\|v\|_{L^{\infty}(\Omega_{\lambda^{k}})}+\|N_{k}x_{n}\|_{L^{\infty}(\Omega_{\lambda^{k}})}\right)\\
&\leq&\omega_{1}(M_{k}+T\lambda^{k}+N_{k}\lambda^{k})\\
&\leq&2(\frac{M_{k}}{\lambda^{k}}+T+N_{k})\omega_{1}(\lambda^{k}).
\end{eqnarray*}
By substituting the above inequality and $(\ref{nn})$ into $(\ref{m2})$, we obtain
\begin{eqnarray*}
\frac{M_{k+1}}{\lambda^{k+1}}&\leq&\frac{1}{\lambda}\sigma(\lambda^{k})
+\frac{\widetilde{C}}{\lambda}\omega(\lambda^{k})P_{k-1}+
\frac{C_{1}(\lambda^{2}+\omega(\lambda^{k}))}{\lambda}\left(\frac{M_{k}}{\lambda^{k}}\right)
+\frac{2C_{2}}{\lambda}(N_{k}+\frac{M_{k}}{\lambda^{k}}+T)\omega_{1}(\lambda^{k})\\
&\leq&\frac{1}{\lambda}\sigma(\lambda^{k})+\frac{\widetilde{C}
+C_{1}}{\lambda}\omega(\lambda^{k})P_{k}+C_{1}\lambda\left(\frac{M_{k}}{\lambda^{k}}\right)
+\frac{2C_{2}\widetilde{C}}{\lambda}\omega_{1}(\lambda^{k})(P_{k}+T).\\
\end{eqnarray*}

We take $\lambda$ small enough firstly to make $C_{1}\lambda\leq\frac{1}{4}$, then we take $k_{0}$ large enough (then fixed) such that
$$
\sum_{i=k_{0}}^{\infty} \frac{\widetilde{C}+C_{1}}{\lambda}\omega(\lambda^{i}) \leq \frac{\widetilde{C}+C_{1}}{\lambda\ln \frac{1}{\lambda}}  \int_{0}^{\lambda^{k_{0}-1}} \frac{\omega(r)}{r} d r \leq \frac{1}{4},
$$
$$
\sum_{i=k_{0}}^{\infty} \frac{2C_{2}\widetilde{C}}{\lambda}\omega_{1}(\lambda^{i})\leq \frac{2C_{2}\widetilde{C}}{\lambda\ln \frac{1}{\lambda}}  \int_{0}^{\lambda^{k_{0}-1}} \frac{\omega_{1}(r)}{r} d r \leq \frac{1}{4}.
$$
For such $k_{0}(\geq 1),$ we have
$$
\sum_{i=k_{0}}^{\infty}\sigma(\lambda^{i}) \leq \frac{1}{\ln \frac{1}{\lambda}} \int_{0}^{1} \frac{\sigma(r)}{r} d r \leq \frac{1}{\ln \frac{1}{\lambda}}.
$$
Therefore for each $k \geq k_{0},$ we have
$$
\begin{aligned}
P_{k+1}-P_{k_{0}} &=\sum_{i=k_{0}}^{k} \frac{M_{i+1}}{\lambda^{i+1}} \\
&\leq\sum_{i=k_{0}}^{k}\frac{1}{\lambda}\sigma(\lambda^{i})
+\sum_{i=k_{0}}^{k}\frac{\widetilde{C}+C_{1}}{\lambda}\omega(\lambda^{i})P_{i}+\sum_{i=k_{0}}^{k}C_{1}\lambda\left(\frac{M_{i}}{\lambda^{i}}\right)
+\sum_{i=k_{0}}^{k}\frac{2C_{2}\widetilde{C}}{\lambda}\omega_{1}(\lambda^{i})(P_{i}+T)\\
& \leq \frac{1}{\lambda\ln \frac{1}{\lambda}}+P_{k+1}\left(\frac{1}{4}+\sum_{i=k_{0}}^{k}\frac{\widetilde{C}+C_{1}}{\lambda}\omega(\lambda^{i})\right)+(P_{k+1}+T)\sum_{i=k_{0}}^{k}\frac{2C_{2}\widetilde{C}}{\lambda}\omega_{1}(\lambda^{i}) \\
& \leq \frac{1}{\lambda\ln \frac{1}{\lambda}}+\frac{3}{4} P_{k+1}+\frac{1}{4}T.
\end{aligned}
$$
Then for all $k \geq k_{0}$,
$$
P_{k+1} \leq \frac{4}{\lambda\ln \frac{1}{\lambda}}+T+4 P_{k_{0}}.
$$
Therefore $\left\{P_{k}\right\}_{k=0}^{\infty}$ is bounded. We already proved $\sum\limits_{i=0}^{\infty} \frac{M_{i}}{\lambda^{i}}$ is convergent and $\left\{N_{i}\right\}_{i=0}^{\infty}$ is bounded.

Furthermore, by $(\ref{nn})$ and the definition of $P_{i}$ it's easy to see
$$
N_{i+1}-N_{i} \leq \widetilde{C} \frac{M_{i}}{\lambda^{i}}=\widetilde{C} P_{i}-\widetilde{C} P_{i-1}, \quad \text { for }~~ i \geq 1,
$$
and
$$
N_{i+1}-\widetilde{C}P_{i} \leq N_{i}-\widetilde{C} P_{i-1}, \quad \text { for }~~ i \geq 1.
$$
So $\left\{N_{i}-\widetilde{C}P_{i-1}\right\}_{i=1}^{\infty}$ is a bounded and non-increasing sequence and $\lim\limits_{i \rightarrow+\infty}(N_{i}-\widetilde{C}P_{i-1})$ exists. In conclusion $\lim\limits_{i \rightarrow+\infty} N_{i}$ exists and we set $\tau:=$
$\lim\limits_{i \rightarrow+\infty} N_{i} .$ The proof is finished.
\
\end{proof}
\begin{lm}\label{lim}$\lim\limits_{i\rightarrow+\infty}\frac{M_{i}}{\lambda^{i}}=0.$
\end{lm}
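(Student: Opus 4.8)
The plan is to reuse the recursive estimate derived in the proof of Lemma \ref{con}, namely
$$\frac{M_{k+1}}{\lambda^{k+1}}\leq\frac{1}{\lambda}\sigma(\lambda^{k})+\frac{\widetilde{C}+C_{1}}{\lambda}\omega(\lambda^{k})P_{k}+C_{1}\lambda\,\frac{M_{k}}{\lambda^{k}}+\frac{2C_{2}\widetilde{C}}{\lambda}\omega_{1}(\lambda^{k})(P_{k}+T),$$
and to pass to the limit superior as $k\to\infty$. By Lemma \ref{con} the sequence $\{P_{k}\}_{k=0}^{\infty}$ is non-decreasing and bounded, hence convergent to some finite $P_{\infty}$; in particular $\sup_{k}M_{k}/\lambda^{k}<\infty$, so $\ell:=\limsup_{k\to\infty}M_{k}/\lambda^{k}$ is a well-defined finite nonnegative real number, and the goal reduces to showing $\ell=0$.

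First I would use that $\sigma,\omega,\omega_{1}$ are moduli of continuity, so that $\sigma(\lambda^{k})\to0$, $\omega(\lambda^{k})\to0$ and $\omega_{1}(\lambda^{k})\to0$ as $k\to\infty$ (recall $0<\lambda<1$). Since $P_{k}\le P_{\infty}$ and $T$ is a fixed constant, the first, second and fourth terms on the right-hand side of the displayed recursion all tend to $0$. Taking the limit superior of both sides therefore gives
$$\ell=\limsup_{k\to\infty}\frac{M_{k+1}}{\lambda^{k+1}}\leq C_{1}\lambda\,\limsup_{k\to\infty}\frac{M_{k}}{\lambda^{k}}=C_{1}\lambda\,\ell.$$

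Finally, recalling that $\lambda$ was fixed in the proof of Lemma \ref{con} so small that $C_{1}\lambda\leq\frac{1}{4}$, this yields $\ell\leq\frac{1}{4}\ell$, hence $\ell=0$; since $M_{k}/\lambda^{k}\ge0$, vanishing of the limit superior forces $\lim_{k\to\infty}M_{k}/\lambda^{k}=0$. (One could alternatively invoke directly that the convergence of the nonnegative series $\sum_{i}M_{i}/\lambda^{i}$, already established in Lemma \ref{con}, forces its general term to tend to $0$.) There is essentially no genuine obstacle here: the only points needing a word of care are that each error modulus $\sigma(\lambda^{k})$, $\omega(\lambda^{k})$, $\omega_{1}(\lambda^{k})$ really does vanish in the limit — which is exactly their (Dini, indeed mere) continuity — and that the prefactors $P_{k}$ multiplying them remain bounded, which is the content of Lemma \ref{con}.
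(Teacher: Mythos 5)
Your proof is correct, but it is considerably more elaborate than the paper's, which simply observes that the terms of the convergent nonnegative series $\sum_{i}M_{i}/\lambda^{i}$ (established in Lemma \ref{con}) must tend to zero — exactly the one-line alternative you mention parenthetically at the end. Your main argument, re-running the recursive estimate and taking $\limsup$ to get $\ell\le C_{1}\lambda\,\ell$ with $C_{1}\lambda\le\tfrac14$, is a valid standalone route and has the minor merit of not needing the full strength of the series' convergence (boundedness of $P_{k}$ suffices), but since Lemma \ref{con} already gives convergence of the series, the direct appeal to the vanishing of its general term is both shorter and what the paper intends; in effect you have given a second proof of a fact whose intended proof you also noticed.
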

\begin{proof}
The proof is straightforward from Lemma \ref{con} since $\sum\limits_{i=0}^{\infty} \frac{M_{i}}{\lambda^{i}}$ is convergent.
\end{proof}
\begin{lm}For each $i=0,1,2, \ldots,~$ there exists $B_{i}$ such that $\lim\limits_{i \rightarrow \infty} B_{i}=0$ and that $$\left\|u-v-\tau x_{n}\right\|_{L^{\infty}(\Omega_{\lambda^{i}})} \leq B_{i} \lambda^{i}.$$
\end{lm}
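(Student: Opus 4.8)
The plan is to extract the desired estimate directly from the two iteration quantities produced in Lemma~\ref{bmkm}, namely the error norms $M_i$ and the slopes $N_i$, using the convergence facts from Lemma~\ref{con} and Lemma~\ref{lim}. The natural choice is
\[
B_i := \frac{M_i}{\lambda^i} + \sum_{j=i}^{\infty} |N_{j+1}-N_j|,
\]
so that the claim $\|u-v-\tau x_n\|_{L^\infty(\Omega_{\lambda^i})} \le B_i \lambda^i$ should follow from a triangle inequality. First I would write, for $x \in \Omega_{\lambda^i}$,
\[
|u(x)-v(x)-\tau x_n| \le |u(x)-v(x)-N_i x_n| + |N_i - \tau|\,|x_n|,
\]
bounding the first term by $M_i$ via \eqref{induction} and $|x_n| \le \lambda^i$ on $\Omega_{\lambda^i}$ in the second term. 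Since $\tau = \lim_{j\to\infty} N_j$, we have $|N_i-\tau| \le \sum_{j=i}^\infty |N_{j+1}-N_j|$, which gives $\|u-v-\tau x_n\|_{L^\infty(\Omega_{\lambda^i})} \le \left(\frac{M_i}{\lambda^i} + \sum_{j=i}^\infty |N_{j+1}-N_j|\right)\lambda^i = B_i \lambda^i$.

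It then remains to check $B_i \to 0$. The term $M_i/\lambda^i \to 0$ is exactly Lemma~\ref{lim}. For the tail sum, I would use the bound from Lemma~\ref{bmkm}, $|N_{j+1}-N_j| \le \frac{\widetilde C}{\lambda^j}\|u-v-N_j x_n\|_{L^\infty(\Omega_{\lambda^j})} \le \widetilde C\, M_j/\lambda^j$, together with the convergence of $\sum_{j=0}^\infty M_j/\lambda^j$ established in Lemma~\ref{con}; convergence of the full series forces its tails $\sum_{j=i}^\infty M_j/\lambda^j$ to vanish as $i\to\infty$, hence $\sum_{j=i}^\infty |N_{j+1}-N_j| \le \widetilde C \sum_{j=i}^\infty M_j/\lambda^j \to 0$. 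Combining the two pieces gives $B_i \to 0$.

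There is essentially no serious obstacle here: every ingredient is already in place, and the only points requiring a little care are purely bookkeeping — making sure the slope $\tau x_n$ (rather than $N_i x_n$) is what appears in the statement, correctly telescoping $N_i - \tau$ into the tail of the increment series, and using $|x_n| \le |x| \le \lambda^i$ on $\Omega_{\lambda^i}$. One should also note that $B_i$ so defined is nonnegative and finite for every $i$ because the series $\sum M_j/\lambda^j$ converges. The hardest conceptual step, the summability of the errors across scales, was already carried out in Lemma~\ref{con} using the three Dini conditions, so this final lemma is just the clean packaging of that work into the pointwise Lipschitz bound at $x_0 = 0$.
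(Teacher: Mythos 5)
Your proof is correct and takes essentially the same route as the paper: both split $\|u-v-\tau x_n\|$ via the triangle inequality into $\|u-v-N_i x_n\| \le M_i$ and the linear term bounded by $|N_i-\tau|\lambda^i$. The only cosmetic difference is that the paper simply sets $B_i = M_i/\lambda^i + |N_i-\tau|$ and invokes $N_i \to \tau$ directly, whereas you replace $|N_i-\tau|$ by the dominating tail $\sum_{j\ge i}|N_{j+1}-N_j|$; this is a valid upper bound and yields the same conclusion.
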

\begin{proof}For any $i \geq 0$ we have
$$
\left\|u-v-\tau x_{n}\right\|_{L^{\infty}(\Omega_{\lambda^{i}})} \leq\|u-v-N_{i}x_{n}\|_{L^{\infty}(\Omega_{\lambda^{i}})}+\|N_{i}x_{n}-\tau x_{n}\|_{L^{\infty}(\Omega_{\lambda^{i}})}.
$$
Using $(\ref{induction})$ we get
$$
\left\|u-v-\tau x_{n}\right\|_{L^{\infty}(\Omega_{\lambda^{i}})}  \leq M_{i}+\lambda^{i}|N_{i}-\tau|.
$$
We set $B_{i}=\frac{M_{i}}{\lambda^{i}}+|N_{i}-\tau|$, then
$$
\left\|u-v-\tau x_{n}\right\|_{L^{\infty}(\Omega_{\lambda^{i}})}  \leq B_{i}\lambda^{i}.
$$
At the same time, by Lemma \ref{con} snd \ref{lim} we have
$$
\lim _{i \rightarrow \infty} B_{i}=0.
$$
The proof is completed.
\end{proof}
\noindent {\bf Proof of Theorem  \ref{mr1}}
From above four lemmas we already show that $u-v$ is differentiable at 0. Since $v$ is a Lipschitz function, it's clear that $u$ is Lipachitz at 0.
\section{Boundary Lipschitz regularity under Reifenberg $C^{1,\text{Dini}}$ condition}
In this section, we will generalize the results in Section 3 to Reifenberg $C^{1,\text{Dini}}$ domain. The main difficulty is the unit normal vectors are changing in different scales. But by Lemma $\ref{rem1.4}$, we notice that the difference of the unit vectors are controlled by Dini modulus of continuity and the unit vectors are convergent. The proof of Theorem $\ref{mr2}$ is similar to Theorem $\ref{mr1}$. We also use the following four lemmas to prove Theorem $\ref{mr2}$. For convenience, we only prove the boundary Lipschitz regularity at $0$. Without loss of generality, we can take $x_{0}=0$, denote $v_{0}$ by $v$ and assume that
$$\quad u(0)=g(0)=0,\quad v(0)=0,\quad \vec{a}=0, \quad r_{0}=1,
$$
$$\omega(1)\leq\lambda, \quad \int_{0}^{1} \frac{\omega(r)}{r} d r \leq 1, \quad \int_{0}^{1} \frac{\sigma(r)}{r} d r \leq 1, \quad \int_{0}^{1} \frac{\omega_{1}(r)}{r} d r \leq 1,
$$
where $\lambda$ will be determined in Lemma $\ref{kl}$ and Lemma $\ref{con2}$. In the following, we denote $\vec{n}_{\lambda^{i}}$ by $\vec{n}_{i}$.

\begin{lm}\label{bmkm2}
There exist nonnegative sequences $\{M_{i}\}_{i=0}^{\infty},$ and $\{N_{i}\}_{i=0}^{\infty},$ with $N_{0}=0$, $M_{0}=\|u-v\|_{L^{\infty}(\Omega_{1})}$, and for $i=0,1,2,\ldots,$
\begin{eqnarray*}
M_{i+1}&=&\|g-v-N_{i}x\cdot\vec{n}_{i}\|_{L^{\infty}(B_{\lambda^{i}}\cap\partial\Omega)}\\
&&+\|N_{i+1}x\cdot(\vec{n}_{i+1}-\vec{n}_{i})\|_{L^{\infty}(\Omega_{\lambda^{i}})}\\
&&+C_{1}(\lambda^{2}+\omega(\lambda^{i}))\|u-v-N_{i}x\cdot\vec{n}_{i}\|_{L^{\infty}(\Omega_{\lambda^{i}})}\\
&&+C_{2}\lambda^{i}\|\overrightarrow{\mathbf{F}}(x,u)-\overrightarrow{\mathbf{F}}(x,0)\|_{L^{\infty}(\Omega_{\lambda^{i}})},
\end{eqnarray*}
\begin{eqnarray*}
\label{n}|N_{i+1}-N_{i}|\leq\frac{\widetilde{C}}{\lambda^{i}}\|u-v-N_{i}x\cdot\vec{n}_{i}\|_{L^{\infty}(\Omega_{\lambda^{i}})},
\end{eqnarray*}
such that
\begin{equation}\label{induction2}
\|u-v-N_{i}x\cdot \vec{n}_i\|_{L^{\infty}(\Omega_{\lambda^{i}})}\leq M_{i}.
\end{equation}
\end{lm}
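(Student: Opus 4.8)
The plan is to mimic the proof of Lemma \ref{bmkm} from Section 3, inducting on $i$ but carefully tracking the fact that the reference hyperplane normal vector $\vec{n}_i$ changes at each scale. As in the previous section, we choose at each step the coordinate frame so that $\vec{n}_{i+1}$ plays the role of the vertical direction $x_n$ in Remark \ref{rem3.1} (via Remark \ref{rem2.7}), but the linear correction carried forward from the previous scale is written in terms of $\vec{n}_i$. The base case $i=0$ is trivial since $N_0=0$ and $M_0=\|u-v\|_{L^\infty(\Omega_1)}$. For the case $i=1$, Definition \ref{exrei} gives $B_1\cap\partial\Omega\subset B_1\cap\{|x\cdot\vec{n}_1|\le\omega(1)\}$, so Lemma \ref{kl} (with $\vec{n}_1$ substituted for $\vec{e}_n$) produces $0<N_1\le\widetilde{C}\|u-v\|_{L^\infty(\Omega_1)}$ and the desired inequality at scale $\lambda$.

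For the inductive step, I would assume (\ref{induction2}) holds at scale $i$ and consider the function $u-v-N_i\,x\cdot\vec{n}_i$ on $\Omega_{\lambda^i}$, which solves $\Delta(u-v-N_i x\cdot\vec{n}_i)=\mathrm{div}(\overrightarrow{\mathbf F}(x,u)-\overrightarrow{\mathbf F}(x,0))$ in $\Omega_{\lambda^i}$ with boundary value $g-v-N_i x\cdot\vec{n}_i$. Rescale by setting $\widetilde u(z)=\lambda^{-i}\big(u(\lambda^i z)-v(\lambda^i z)-N_i\lambda^i z\cdot\vec{n}_i\big)$, and similarly for $\widetilde g$ and $\widetilde f$; then $\widetilde\Omega=\{z:\lambda^i z\in\Omega\}$ satisfies, by Definition \ref{exrei} applied at radius $\lambda^i$, $B_1\cap\partial\widetilde\Omega\subset B_1\cap\{|z\cdot\vec{n}_i|\le\omega(\lambda^i)\}$. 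Note here I use the normal $\vec{n}_i=\vec{n}_{\lambda^i}$ that appears in the two-hyperplane sandwich at radius $\lambda^i$. Apply Remark \ref{rem3.1} with the unit vector $\vec{n}_i$ to get a constant $K$ with $K\le\widetilde C\|\widetilde u\|_{L^\infty(B_1\cap\widetilde\Omega)}=\widetilde C\lambda^{-i}\|u-v-N_i x\cdot\vec{n}_i\|_{L^\infty(\Omega_{\lambda^i})}$ and
\begin{eqnarray*}
\|\widetilde u-K z\cdot\vec{n}_i\|_{L^\infty(B_\lambda\cap\widetilde\Omega)}
&\le&\|\widetilde g\|_{L^\infty(B_1\cap\partial\widetilde\Omega)}+C_1(\lambda^2+\omega(\lambda^i))\|\widetilde u\|_{L^\infty(B_1\cap\widetilde\Omega)}\\
&&+C_2\|\widetilde f\|_{L^\infty(B_1\cap\widetilde\Omega)}.
\end{eqnarray*}
Set $N_{i+1}=N_i+K$, so that $|N_{i+1}-N_i|=K$ obeys the stated bound.

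The subtlety — and the main point where this argument departs from the one in Section 3 — is that Lemma \ref{kl} gives control of $u-v-N_{i+1}x\cdot\vec{n}_i$ at scale $\lambda^{i+1}$, but the induction hypothesis at the next step is stated in terms of $\vec{n}_{i+1}$. I would bridge this by writing
$$u-v-N_{i+1}x\cdot\vec{n}_{i+1}=\big(u-v-N_{i+1}x\cdot\vec{n}_i\big)+N_{i+1}x\cdot(\vec{n}_i-\vec{n}_{i+1}),$$
scaling back from $\widetilde u$ to $u$, and using the triangle inequality; the extra term $\|N_{i+1}x\cdot(\vec{n}_{i+1}-\vec{n}_i)\|_{L^\infty(\Omega_{\lambda^i})}$ is exactly the second term built into the definition of $M_{i+1}$ in the statement. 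This yields (\ref{induction2}) at level $i+1$ and closes the induction. I expect the only real obstacle is bookkeeping: keeping straight which normal vector is attached to which scale and confirming that, by Lemma \ref{rem1.4}, $|\vec{n}_{i+1}-\vec{n}_i|\le S(\lambda)\omega(\lambda^i)$ so that the new term is of the right order to be summable in the subsequent convergence lemma (Lemma \ref{con2}) — but establishing that summability is the job of the next lemma, not this one; here it suffices to record $M_{i+1}$ in the indicated form.
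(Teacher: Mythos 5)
Your proposal follows the same route as the paper: induct on $i$, rescale $u-v-N_i\,x\cdot\vec{n}_i$ to $B_1$, apply Remark~\ref{rem3.1} (via Remark~\ref{rem2.7}) in the direction of the normal associated with the current scale, and then change from $\vec{n}_i$ to $\vec{n}_{i+1}$ by the triangle inequality, which is exactly what produces the extra term $\|N_{i+1}\,x\cdot(\vec{n}_{i+1}-\vec{n}_i)\|_{L^\infty(\Omega_{\lambda^i})}$ in $M_{i+1}$. The inductive step is correct and matches the paper.

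One inconsistency worth flagging: your opening sentence states that ``$\vec{n}_{i+1}$ plays the role of the vertical direction $x_n$'' when applying Remark~\ref{rem3.1}, and in the $i=1$ step you invoke Lemma~\ref{kl} ``with $\vec{n}_1$ substituted for $\vec{e}_n$.'' In the paper's shorthand $\vec{n}_i=\vec{n}_{\lambda^i}$, the sandwich hypothesis of Definition~\ref{exrei} at radius $\lambda^i$ holds with respect to $\vec{n}_i$, not $\vec{n}_{i+1}$; so the key lemma must be applied along $\vec{n}_i$ (for $i=0$, along $\vec{n}_0=\vec{n}_{\lambda^0}$), and only afterwards does one pass to $\vec{n}_{i+1}$. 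You in fact do this correctly in the body of the inductive step (``Apply Remark \ref{rem3.1} with the unit vector $\vec{n}_i$''), so the slip is only in the framing sentences and the base case $i=1$; applying Lemma~\ref{kl} along $\vec{n}_1$ at scale $1$ would not be justified by the Reifenberg condition at radius $1$. Correct the base-case normal to $\vec{n}_0$ and the opening description to say the approximation is carried out along $\vec{n}_i$, and the proof matches the paper's.
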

\begin{proof}
We prove this lemma inductively by using Lemma $\ref{kl}$, Remark $\ref{rem3.1}$ and $\ref{rem2.7}$ repeatedly.

When $i=0,$ since $N_{0}=0$ and $M_{0}=\|u-v\|_{L^{\infty}(\Omega_{1})}$,  it's easy to see
$$\|u-v-N_{0}x\cdot\vec{n}_{0}\|_{L^{\infty}(\Omega_{1})}= M_{0}.$$

When $i=1,$ by Definition $\ref{exrei}$, we have $B_{1}\cap\partial\Omega\subset\left\{|x\cdot\vec{n}_{0}|\leq \omega(1)\right\}$. Therefore by lemma $\ref{kl}$, there exists $0<N_{1}<\widetilde{C}\|u-v\|_{L^{\infty}(B_{1}\cap\Omega)}$ such that
\begin{eqnarray*}
\|u-v-N_{1}x\cdot\vec{n}_{0}\|_{L^{\infty}(B_{\lambda}\cap\Omega)}&\leq&
\|g-v\|_{L^{\infty}(B_{1}\cap\partial\Omega)}+C_{1}(\lambda^{2}+\omega(1))\|u-v\|_{L^{\infty}(B_{1}\cap\Omega)}\\
&&+C_{2}\|\overrightarrow{\mathbf{F}}(x,u)-\overrightarrow{\mathbf{F}}(x,0)\|_{L^{\infty}(B_{1}\cap\Omega)}.
\end{eqnarray*}
Then we have
\begin{eqnarray*}
\|u-v-N_{1}x\cdot\vec{n}_{1}\|_{L^{\infty}(B_{\lambda}\cap\Omega)}&\leq&
\|g-v\|_{L^{\infty}(B_{1}\cap\partial\Omega)}+\|N_{1}x\cdot(\vec{n}_{1}-\vec{n}_{0})\|_{L^{\infty}(B_{1}\cap\Omega)}\\
&&+C_{1}(\lambda^{2}+\omega(1))\|u-v\|_{L^{\infty}(B_{1}\cap\Omega)}+C_{2}\|\overrightarrow{\mathbf{F}}(x,u)-\overrightarrow{\mathbf{F}}(x,0)\|_{L^{\infty}(B_{1}\cap\Omega)}\triangleq M_{1}.
\end{eqnarray*}
and
$$|N_{1}-N_{0}|<\widetilde{C}\|u-v\|_{L^{\infty}(B_{1}\cap\Omega)}.$$
Next we assume that the conclusion is true for $i$. We consider the equation
$$
\left\{
\begin{array}{rcll}
\Delta (u-v-N_{i}x\cdot\vec{n}_{i})&=&\text{div}(\overrightarrow{\mathbf{F}}(x,u)-\overrightarrow{\mathbf{F}}(x,0))\qquad&\text{in}~~\Omega_{\lambda^{i}},\\
u-v-N_{i}x\cdot\vec{n}_{i}&=&g-v-N_{i}x\cdot\vec{n}_{i}\qquad&\text{on}~~B_{\lambda^{i}}\cap\partial\Omega. \\
\end{array}
\right.
$$
For $z=(z_{1}, z_{2}, \cdots,z_{n})\in\mathbb{R}^{n}$ we set
$$\widetilde{u}(z)=\frac{u(\lambda^{i} z)-v(\lambda^{i} z)-N_{i}\lambda^{i} z\cdot\vec{n}_{i}}{\lambda^{i}},
$$
$$\widetilde{g}(z)=\frac{g(\lambda^{i} z)-v(\lambda^{i} z)-N_{i}\lambda^{i} z\cdot\vec{n}_{i}}{\lambda^{i}},
$$
$$\widetilde{f}(z)=\overrightarrow{\mathbf{F}}(\lambda^{i} z,u(\lambda^{i} z))-\overrightarrow{\mathbf{F}}(\lambda^{i} z,0).
$$
Then $\widetilde{u}(z)$ is a solution of
$$
\left\{
\begin{array}{rcll}
\Delta \widetilde{u}(z)&=&\text{div}{\widetilde{f}}(z)\qquad&\text{in}~~B_{1}\cap\widetilde{\Omega},\\
\widetilde{u}(z)&=&\widetilde{g}(z)\qquad&\text{on}~~B_{1}\cap\partial\widetilde{\Omega}, \\
\end{array}
\right.
$$
where $\widetilde{\Omega}=\{z:\lambda^{i}z\in\Omega\}.$ Therefore $B_{1}\cap\partial\widetilde{\Omega}\subset B_{1}\cap\left\{|z\cdot\vec{n}_{i}|\leq \omega(\lambda^{i})\right\}$. Then by Remark $\ref{rem3.1}$, there exists a constant $K$ such that
\begin{eqnarray*}
\|\widetilde{u}-Kz\cdot\vec{n}_{i}\|_{L^{\infty}(B_{\lambda}\cap\widetilde{\Omega})}
&\leq&\|\widetilde{g}\|_{L^{\infty}(B_{1}\cap\partial\widetilde{\Omega})}+C_{1}(\lambda^{2}+\omega(\lambda^{i}))\|\widetilde{u}\|_{L^{\infty}(B_{1}\cap\widetilde{\Omega})}\\
&&+C_{2}\|{\widetilde{f}}\|_{L^{\infty}(B_{1}\cap\widetilde{\Omega})},
\end{eqnarray*}
where $K\leq \widetilde{C}\|\widetilde{u}\|_{L^{\infty}(B_{1}\cap\widetilde{\Omega})}=\frac{\widetilde{C}}{\lambda^{i}}\|u-v-N_{i}x\cdot\vec{n}_{i}\|_{L^{\infty}(\Omega_{\lambda^{i}})}.$
Let $N_{i+1}=N_{i}+K,$ scaling back, then we get
\begin{eqnarray*}
\|u-v-N_{i+1}x\cdot\vec{n}_{i+1}\|_{L^{\infty}(\Omega_{\lambda^{i+1}})}
&\leq&\|g-v-N_{i}x\cdot\vec{n}_{i}\|_{L^{\infty}(B_{\lambda^{i}}\cap\partial\Omega)}\\
&&+\|N_{i+1}x\cdot(\vec{n}_{i+1}-\vec{n}_{i})\|_{L^{\infty}(\Omega_{\lambda^{i}})}\\
&&+C_{1}(\lambda^{2}+\omega(\lambda^{i}))\|u-v-N_{i}x\cdot\vec{n}_{i}\|_{L^{\infty}(\Omega_{\lambda^{i}})}\\
&&+C_{2}\lambda^{i}\|\overrightarrow{\mathbf{F}}(x,u)-\overrightarrow{\mathbf{F}}(x,0)\|_{L^{\infty}(\Omega_{\lambda^{i}})}\triangleq M_{i+1},
\end{eqnarray*}
and
$$|N_{i+1}-N_{i}|=K\leq\frac{\widetilde{C}}{\lambda^{i}}\|u-v-N_{i}x\cdot\vec{n}_{i}\|_{L^{\infty}(\Omega_{\lambda^{i}})}.$$
This completes the proof of Lemma $\ref{bmkm2}$.
\end{proof}

\begin{lm}\label{con2}
$\sum\limits_{i=0}^{\infty} \frac{M_{i}}{\lambda^{i}}<\infty$ and $\lim \limits_{i \rightarrow \infty} N_{i}$ exists. We set
$$
\lim _{i \rightarrow \infty} N_{i}=\tau.
$$
\end{lm}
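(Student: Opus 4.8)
The plan is to repeat, almost verbatim, the iteration argument used in the proof of Lemma \ref{con}. The only genuinely new feature of the Reifenberg setting is the extra term $\|N_{i+1}\,x\cdot(\vec{n}_{i+1}-\vec{n}_i)\|_{L^{\infty}(\Omega_{\lambda^i})}$ in the recursion for $M_{i+1}$ supplied by Lemma \ref{bmkm2}; it encodes the tilt of the approximating hyperplane between consecutive scales. The crucial observation is that this tilt is controlled, through Lemma \ref{rem1.4}, by the \emph{same} Dini modulus $\omega$ that governs $\partial\Omega$, so the new term is of exactly the same type as terms already present in the non-Reifenberg argument and is absorbed by the same tail-summation device.

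First I would reinstate the bookkeeping: let $T$ be the Lipschitz constant of $v$ (so $\|v\|_{L^{\infty}(B_{\lambda^i}\cap\partial\Omega)}\le T\lambda^i$), set $P_k:=\sum_{i=0}^{k}M_i/\lambda^i$, and observe that $|N_{i+1}-N_i|\le\widetilde{C}M_i/\lambda^i$ together with $N_0=0$ gives $N_{k+1}\le\widetilde{C}P_k$ by telescoping. Then I would bound each term in the formula for $M_{i+1}$. For the boundary term, the Reifenberg inclusion $B_{\lambda^i}\cap\partial\Omega\subset\{|x\cdot\vec{n}_i|\le\lambda^i\omega(\lambda^i)\}$ and Definition \ref{bvdini} (with $g(0)=v(0)=0$, $\vec{a}=0$) give
$$\|g-v-N_i\,x\cdot\vec{n}_i\|_{L^{\infty}(B_{\lambda^i}\cap\partial\Omega)}\le\lambda^i\sigma(\lambda^i)+N_i\lambda^i\omega(\lambda^i).$$
For the $\overrightarrow{\mathbf{F}}$-term, Assumption \ref{as1} together with $(\ref{diniproperty})$ gives $\|\overrightarrow{\mathbf{F}}(x,u)-\overrightarrow{\mathbf{F}}(x,0)\|_{L^{\infty}(\Omega_{\lambda^i})}\le 2(M_i/\lambda^i+T+N_i)\omega_1(\lambda^i)$, exactly as in Lemma \ref{con}. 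For the new rotation term, Lemma \ref{rem1.4} with $r=\lambda^i$ and $\theta=\lambda$ gives $|\vec{n}_{i+1}-\vec{n}_i|\le S(\lambda)\omega(\lambda^i)$, whence, using $|x|<\lambda^i$ on $\Omega_{\lambda^i}$ and $N_{i+1}\le\widetilde{C}P_i$,
$$\|N_{i+1}\,x\cdot(\vec{n}_{i+1}-\vec{n}_i)\|_{L^{\infty}(\Omega_{\lambda^i})}\le\widetilde{C}S(\lambda)\lambda^i\omega(\lambda^i)P_i.$$

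Substituting these bounds, dividing by $\lambda^{k+1}$, and using $M_i/\lambda^i\le P_i$ and $N_i\le\widetilde{C}P_i$, I would arrive at
$$\frac{M_{k+1}}{\lambda^{k+1}}\le\frac{1}{\lambda}\sigma(\lambda^k)+\frac{C_3}{\lambda}\omega(\lambda^k)P_k+C_1\lambda\,\frac{M_k}{\lambda^k}+\frac{C_4}{\lambda}\omega_1(\lambda^k)(P_k+T),$$
with $C_3=\widetilde{C}+C_1+\widetilde{C}S(\lambda)$ and $C_4=2C_2\widetilde{C}$; the only difference from Lemma \ref{con} is the enlarged constant $C_3$. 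I would then choose $\lambda$ small so that $C_1\lambda\le\frac{1}{4}$ --- which fixes $\lambda$, and therefore also $S(\lambda)$ and $C_3$ --- and afterwards choose $k_0$ large enough that $\sum_{i=k_0}^{\infty}\frac{C_3}{\lambda}\omega(\lambda^i)\le\frac{1}{4}$ and $\sum_{i=k_0}^{\infty}\frac{C_4}{\lambda}\omega_1(\lambda^i)\le\frac{1}{4}$, which is possible by the Dini integrability of $\omega$ and $\omega_1$. Summing over $i=k_0,\dots,k$ as in Lemma \ref{con} yields $P_{k+1}-P_{k_0}\le\frac{1}{\lambda\ln(1/\lambda)}+\frac{3}{4}P_{k+1}+\frac{1}{4}T$, so $\{P_k\}$ is bounded; hence $\sum_i M_i/\lambda^i$ converges and $\{N_i\}$ is bounded. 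Finally, since $N_{i+1}-N_i\le\widetilde{C}(P_i-P_{i-1})$, the sequence $\{N_i-\widetilde{C}P_{i-1}\}$ is non-increasing and bounded below (as $N_i\ge0$ and $\{P_i\}$ is bounded), hence convergent; since $\{P_i\}$ converges, so does $\{N_i\}$, and we set $\tau:=\lim_i N_i$.

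I do not expect a serious obstacle beyond careful bookkeeping. The whole scheme is engineered so that Lemma \ref{rem1.4} degrades the normal by the same modulus $\omega$ that already appears, and $N_{i+1}$ is already slaved to $\widetilde{C}P_i$, so the rotation term behaves just like the terms handled in Lemma \ref{con}. The one point that requires attention is the order of the choices: $\lambda$ (and hence the constant $S(\lambda)$) must be fixed \emph{before} $k_0$ is selected, so that the tail sums of $\frac{C_3}{\lambda}\omega$ and $\frac{C_4}{\lambda}\omega_1$ can still be made arbitrarily small.
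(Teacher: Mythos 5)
Your proposal is correct and follows essentially the same route as the paper: you reproduce the recursion from Lemma \ref{bmkm2}, bound the new rotation term $\|N_{i+1}x\cdot(\vec{n}_{i+1}-\vec{n}_i)\|$ via Lemma \ref{rem1.4} by $\widetilde{C}S(\lambda)\lambda^i\omega(\lambda^i)P_i$, and then run the same tail-summation and telescoping argument as in Lemma \ref{con}, with the constant in front of $\omega(\lambda^k)P_k$ enlarged to $\widetilde{C}+C_1+\widetilde{C}S(\lambda)$. Your remark on the order of the choices (fix $\lambda$, hence $S(\lambda)$, before picking $k_0$) is a correct and helpful point that the paper leaves implicit.
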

\begin{proof}
We assume $T$ is the Lipschitz constant respect to $v$, then
$$\|v\|_{L^{\infty}(B_{\lambda^{i}}\cap\partial\Omega)}=\|v-v(0)\|_{L^{\infty}(B_{\lambda^{i}}\cap\partial\Omega)}\leq T\lambda^{i}.
$$
For $k\geq0,$ we suppose $P_{k}=\sum\limits_{i=0}^{k}\frac{M_{i}}{\lambda^{i}}.$ By Lemma $\ref{bmkm2}$, noting that $N_{0}=0$ and $M_{0}=\|u-v\|_{L^{\infty}(\Omega_{1})}$, then for any $k\geq0$ we have
\begin{equation}\label{n2}
N_{k+1}\leq N_{k}+\widetilde{C}\frac{M_{k}}{\lambda^{k}}\leq\widetilde{C}P_{k},
\end{equation}
\begin{eqnarray*}
M_{k+1}\leq\lambda^{k}\sigma(\lambda^{k})+N_{k}\lambda^{k}\omega(\lambda^{k})+N_{k+1}\lambda^{k}S(\lambda)\omega(\lambda^{k})+C_{1}(\lambda^{2}+\omega(\lambda^{k}))M_{k}+C_{2}\lambda^{k}\omega_{1}(\|u\|_{L^{\infty}(\Omega_{\lambda^{k}})}),
\end{eqnarray*}
where Definition $\ref{exrei}$, $\ref{bvdini}$, Lemma $\ref{rem1.4}$ and Assumption $\ref{as1}$ are used.
Then
\begin{equation}\label{m22}
\frac{M_{k+1}}{\lambda^{k+1}}\leq\frac{1}{\lambda}\left(\sigma(\lambda^{k})+N_{k}\omega(\lambda^{k})+N_{k+1}S(\lambda)\omega(\lambda^{k})\right)+\frac{C_{1}(\lambda^{2}+\omega(\lambda^{k}))}{\lambda}\left(\frac{M_{k}}{\lambda^{k}}\right)+\frac{C_{2}}{\lambda}\omega_{1}(\|u\|_{L^{\infty}(\Omega_{\lambda^{k}})}).
\end{equation}
Recalling the property of the modulus of continuity (see ($\ref{diniproperty}$)) we have
\begin{eqnarray*}
\omega_{1}(\|u\|_{L^{\infty}(\Omega_{\lambda^{k}})})&\leq&\omega_{1}\left(\|u-v-N_{k}x\cdot\vec{n}_{k}\|_{L^{\infty}(\Omega_{\lambda^{k}})}+
\|v\|_{L^{\infty}(\Omega_{\lambda^{k}})}+\|N_{k}x\cdot\vec{n}_{k}\|_{L^{\infty}(\Omega_{\lambda^{k}})}\right)\\
&\leq&\omega_{1}(M_{k}+T\lambda^{k}+N_{k}\lambda^{k})\\
&\leq&2(\frac{M_{k}}{\lambda^{k}}+T+N_{k})\omega_{1}(\lambda^{k}).
\end{eqnarray*}
By substituting the above inequality and $(\ref{n2})$ into $(\ref{m22})$, we obtain
\begin{eqnarray*}
\frac{M_{k+1}}{\lambda^{k+1}}&\leq&\frac{1}{\lambda}\sigma(\lambda^{k})
+\frac{\widetilde{C}}{\lambda}\omega(\lambda^{k})(P_{k-1}+S(\lambda)P_{k})+
\frac{C_{1}(\lambda^{2}+\omega(\lambda^{k}))}{\lambda}\left(\frac{M_{k}}{\lambda^{k}}\right)
+\frac{2C_{2}}{\lambda}(N_{k}+\frac{M_{k}}{\lambda^{k}}+T)\omega_{1}(\lambda^{k})\\
&\leq&\frac{1}{\lambda}\sigma(\lambda^{k})+\frac{\widetilde{C}
+C_{1}+\widetilde{C}S(\lambda)}{\lambda}\omega(\lambda^{k})P_{k}+C_{1}\lambda\left(\frac{M_{k}}{\lambda^{k}}\right)
+\frac{2C_{2}\widetilde{C}}{\lambda}\omega_{1}(\lambda^{k})(P_{k}+T).\\
\end{eqnarray*}
The remaining proof is the same as Lemma $\ref{con}$, then we get $\sum\limits_{i=0}^{\infty} \frac{M_{i}}{\lambda^{i}}<\infty$ and $\lim \limits_{i \rightarrow \infty} N_{i}$ exists.

\end{proof}

\begin{lm}\label{lim2}$\lim\limits_{i\rightarrow+\infty}\frac{M_{i}}{\lambda^{i}}=0.$
\end{lm}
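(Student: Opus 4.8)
The plan is to mirror the argument for Lemma \ref{lim} verbatim, since the statement $\lim_{i\to\infty} M_i/\lambda^i = 0$ is a routine consequence of the convergence of the series $\sum_{i=0}^\infty M_i/\lambda^i$ established in Lemma \ref{con2}. First I would recall that Lemma \ref{con2} gives $\sum_{i=0}^\infty \frac{M_i}{\lambda^i} < \infty$; since every $M_i \geq 0$, the sequence $\left\{P_k\right\}_{k=0}^\infty$ of partial sums, $P_k = \sum_{i=0}^k \frac{M_i}{\lambda^i}$, is non-decreasing and bounded, hence convergent. A convergent series necessarily has terms tending to zero, so $\frac{M_i}{\lambda^i} \to 0$ as $i \to \infty$.

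There is essentially no obstacle here: the entire content is packaged inside Lemma \ref{con2}, and the present lemma is a one-line corollary. I would write the proof as: ``The proof is straightforward from Lemma \ref{con2}, since $\sum\limits_{i=0}^{\infty} \frac{M_{i}}{\lambda^{i}}$ is convergent.'' If a reader wants the explicit detail, it suffices to note that the general term of a convergent series of nonnegative reals tends to zero. This matches exactly the structure of Lemma \ref{lim} in Section 3, which plays the same auxiliary role in the proof of Theorem \ref{mr1}: it feeds into the final lemma bounding $\|u - v - \tau\, x\cdot\vec{n}_*\|_{L^\infty(\Omega_{\lambda^i})}$ by $B_i \lambda^i$ with $B_i \to 0$, which in turn establishes the pointwise differentiability of $u - v$ at $0$ and hence (since $v$ is Lipschitz by Assumption \ref{as2}) the boundary Lipschitz regularity of $u$ claimed in Theorem \ref{mr2}.

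Thus my proposed proof is simply:
\begin{proof}
The proof is straightforward from Lemma \ref{con2} since $\sum\limits_{i=0}^{\infty} \frac{M_{i}}{\lambda^{i}}$ is convergent, and the general term of a convergent series tends to zero.
\end{proof}
The only thing to be careful about is consistency of notation with Section 4 (the normal vector $\vec{n}_*$ and the sequences $\{M_i\}$, $\{N_i\}$ from Lemma \ref{bmkm2} rather than Lemma \ref{bmkm}), but since this particular lemma involves only $M_i$ and $\lambda$, no change is needed relative to the $C^{1,\mathrm{Dini}}$ case.
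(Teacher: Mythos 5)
Your proposal is correct and matches the paper's approach exactly: the paper omits a written proof for Lemma \ref{lim2} precisely because it is the same one-line corollary of Lemma \ref{con2} as Lemma \ref{lim} is of Lemma \ref{con}, namely that the general term of a convergent series of nonnegative reals tends to zero.
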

\begin{lm}For each $i=0,1,2, \ldots,~$ there exists $B_{i}$ such that $\lim\limits_{i \rightarrow \infty} B_{i}=0$ and that $$\left\|u-v-\tau x\cdot\vec{n}_{*}\right\|_{L^{\infty}(\Omega_{\lambda^{i}})} \leq B_{i} \lambda^{i},$$ where $\vec{n}_{*}$
is the limit of $\{\vec{n}_{\lambda^{i}}\}_{i=0}^{\infty}$ in Lemma $\ref{rem1.4}$.
\end{lm}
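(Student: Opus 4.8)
The plan is to repeat, almost verbatim, the argument used for the analogous lemma in Section~3, the only new feature being that both the scalar $N_i$ and the direction $\vec{n}_i$ have to be replaced by their limits $\tau$ and $\vec{n}_{*}$. First I would split, for every $i\ge 0$,
$$
\left\|u-v-\tau x\cdot\vec{n}_{*}\right\|_{L^{\infty}(\Omega_{\lambda^{i}})}
\le \|u-v-N_{i}x\cdot\vec{n}_{i}\|_{L^{\infty}(\Omega_{\lambda^{i}})}
+\|N_{i}x\cdot\vec{n}_{i}-\tau x\cdot\vec{n}_{*}\|_{L^{\infty}(\Omega_{\lambda^{i}})},
$$
and bound the first term by $M_i$ using $(\ref{induction2})$.

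For the second term I would decompose $N_{i}x\cdot\vec{n}_{i}-\tau x\cdot\vec{n}_{*}=(N_{i}-\tau)\,x\cdot\vec{n}_{i}+\tau\,x\cdot(\vec{n}_{i}-\vec{n}_{*})$ and use $|x|\le\lambda^{i}$ on $\Omega_{\lambda^{i}}$ together with $|\vec{n}_{i}|=|\vec{n}_{*}|=1$ to obtain
$$
\|N_{i}x\cdot\vec{n}_{i}-\tau x\cdot\vec{n}_{*}\|_{L^{\infty}(\Omega_{\lambda^{i}})}
\le \lambda^{i}\bigl(|N_{i}-\tau|+|\tau|\,|\vec{n}_{i}-\vec{n}_{*}|\bigr).
$$
Combining the two estimates, the choice
$$
B_{i}:=\frac{M_{i}}{\lambda^{i}}+|N_{i}-\tau|+|\tau|\,|\vec{n}_{i}-\vec{n}_{*}|
$$
yields the claimed inequality $\left\|u-v-\tau x\cdot\vec{n}_{*}\right\|_{L^{\infty}(\Omega_{\lambda^{i}})}\le B_{i}\lambda^{i}$.

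It then remains to check $B_{i}\to 0$. The term $M_{i}/\lambda^{i}\to 0$ is Lemma~\ref{lim2}, and $|N_{i}-\tau|\to 0$ together with the finiteness of $\tau$ is Lemma~\ref{con2}. For the last term I would use Lemma~\ref{rem1.4}, which (with $r_{0}=1$, so $\vec{n}_{i}=\vec{n}_{\lambda^{i}}$) gives $|\vec{n}_{j}-\vec{n}_{j+1}|=|\vec{n}_{\lambda^{j}}-\vec{n}_{\lambda\cdot\lambda^{j}}|\le S(\lambda)\,\omega(\lambda^{j})$, whence
$$
|\vec{n}_{i}-\vec{n}_{*}|\le\sum_{j=i}^{\infty}|\vec{n}_{j}-\vec{n}_{j+1}|\le S(\lambda)\sum_{j=i}^{\infty}\omega(\lambda^{j}),
$$
and the tail on the right tends to $0$ because $\sum_{j\ge 0}\omega(\lambda^{j})\le\frac{1}{\ln(1/\lambda)}\int_{0}^{1}\frac{\omega(r)}{r}\,dr<\infty$ by the Dini condition on $\omega$.

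I expect the only point needing genuine care is the control of $|\vec{n}_{i}-\vec{n}_{*}|$: one must pass through Lemma~\ref{rem1.4} to see that the normals form a Cauchy sequence whose increments are summable against the Dini modulus, and combine this with the finiteness of $\tau$ coming from Lemma~\ref{con2} so that the factor $|\tau|$ does no harm. Everything else is the same bookkeeping as in the $C^{1,\text{Dini}}$ case of Section~3, so the proof of Theorem~\ref{mr2} is completed exactly as that of Theorem~\ref{mr1}: $u-v$ is differentiable at $0$ and, $v$ being Lipschitz, so is $u$ at $0$.
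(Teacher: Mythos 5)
Your proof is correct and follows essentially the same route as the paper: a triangle-inequality split, the bound $\|u-v-N_i x\cdot\vec n_i\|\le M_i$ from $(\ref{induction2})$, and convergence of $M_i/\lambda^i$, $N_i$, and $\vec n_i$ from Lemmas \ref{con2}, \ref{lim2}, \ref{rem1.4}. The only cosmetic difference is the middle decomposition: you write $N_i x\cdot\vec n_i-\tau x\cdot\vec n_* =(N_i-\tau)x\cdot\vec n_i+\tau\,x\cdot(\vec n_i-\vec n_*)$, giving $B_i=\frac{M_i}{\lambda^i}+|N_i-\tau|+|\tau|\,|\vec n_i-\vec n_*|$, whereas the paper inserts $N_i x\cdot\vec n_*$ instead and gets $B_i=\frac{M_i}{\lambda^i}+N_i|\vec n_i-\vec n_*|+|N_i-\tau|$; since $N_i\to\tau$ is bounded, the two are interchangeable.
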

\begin{proof}For any $i \geq 0$ we have
\begin{eqnarray*}
\left\|u-v-\tau x\cdot\vec{n}_{*}\right\|_{L^{\infty}(\Omega_{\lambda^{i}})} &\leq&\|u-v-N_{i}x\cdot\vec{n}_{i}\|_{L^{\infty}(\Omega_{\lambda^{i}})}+\|N_{i}x\cdot\vec{n}_{i}-N_{i} x\cdot\vec{n}_{*}\|_{L^{\infty}(\Omega_{\lambda^{i}})}\\
&&+\|N_{i}x\cdot\vec{n}_{*}-\tau x\cdot\vec{n}_{*}\|_{L^{\infty}(\Omega_{\lambda^{i}})}
\end{eqnarray*}
Using $(\ref{induction2})$ we get
$$
\left\|u-v-\tau x\cdot\vec{n}_{*}\right\|_{L^{\infty}(\Omega_{\lambda^{i}})}  \leq M_{i}+N_{i}\lambda^{i}|\vec{n}_{i}-\vec{n}_{*}|+\lambda^{i}|N_{i}-\tau|.
$$
We set $B_{i}=\frac{M_{i}}{\lambda^{i}}+N_{i}|\vec{n}_{i}-\vec{n}_{*}|+|N_{i}-\tau|$, then
$$
\left\|u-v-\tau x\cdot\vec{n}_{*}\right\|_{L^{\infty}(\Omega_{\lambda^{i}})}  \leq B_{i}\lambda^{i}.
$$
At the same time, by Lemma \ref{con2} snd \ref{lim2} and Lemma $\ref{rem1.4}$ we have
$$
\lim _{i \rightarrow \infty} B_{i}=0.
$$
The proof is completed.
\end{proof}
\noindent {\bf Proof of Theorem  \ref{mr2}}
From above four lemmas we already show that $u-v$ is differentiable at 0. Since $v$ is a Lipschitz function, it's clear that $u$ is Lipachitz at 0.


\begin{thebibliography}{99}


\bibitem{1}E. Indrei, A. Minne, L. Nurbekyan, Regularity of solutions in semilinear elliptic theory, Bull. Math. Sci. 7(1)(2017), 177-200.

\bibitem{2}X. Zou, Y. Z. Chen, Fully nonlinear parabolic equations and the Dini condition, Acta Math. Sin. 18(3)(2002), 473-480.

\bibitem{3}F. Y. Ma, L. H. Wang, Boundary first order derivative estimates for fully nonlinear elliptic equations, J. Differential Equations. 252(2)(2012), 988-1002.

\bibitem{4}Y. P. Huang, D. S. Li, L. H. Wang, Boundary behavior of solutions of elliptic equations in nondivergence form, Manuscripta Math. 143(3-4)(2014), 525-541.

\bibitem{5}Y. P. Huang, D. S. Li, L. H. Wang, A note on boundary differentiability of solutions of elliptic equations in nondivergence form, Manuscripta Math. 162(3-4)(2020), 305-313.

\bibitem{DT} D. Gilbarg and N. S. Trudinger, Elliptic partial differential equations of second
order. 2nd Edition, Springer-Verlag, Berlin, 1983.

\bibitem{6}H. Shahgholian, $C^{1,1}$ regularity in semilinear elliptic problems, Comm. Pure Appl. Math. 56(2)(2003), 278-281.

\bibitem{7}J. Kovats, Dini-Campanato spaces and applications to nonlinear elliptic equations, Electron. J. Differential Equations. 37(1999), 1-20.

\bibitem{8}N. V. Krylov, Boundedly inhomogeneous elliptic and parabolic equations in a domain, (Russian)Izv. Akad. Nauk. SSSR Ser. Mat. 47(1)(1983), 75-108.

\bibitem{9}N. V. Krylov, Estimates for the derivatives of solutions of nonlinear parabolic equations, (Russian)Dokl. Akad. Nauk, SSSR 274(1)(1984), 23-26.

\bibitem{ALS}J. Andersson, E. Lindgren, and H. Shahgholian, Optimal Regularity for the No-Sign Obstacle Problem, Comm. Pure Appl. Math. 66 (2013), no. 2, 245ÿ262.

\bibitem{10}A. Petrosyan, H. Shahgholian, N. Uraltseva, Regularity of free boundaries in obstacle-type problems, Graduate Studies in Mathematics, 136. American Mathematical Society, Providence, RI, 2012.

\bibitem{11}N. S. Trudinger, Lipschitz continuous solutions of elliptic equations of the form $\mathcal{A}(Du)D^{2}u=0,$ Math. Z. 109(1969), 211-216.

\bibitem{12}N. S. Trudinger, On the Dirichlet problem for quasilinear uniformly elliptic equations in $n$ variables, Arch. Rational Mech. Anal. 2(1967), 108-119.

\bibitem{13}D. S. Li, L. H. Wang, Boundary differentiablity of solutions of elliptic equations on convex domains, Manuscripta Math. 121(2)(2006), 137-156.

\bibitem{14}D. S. Li, L. H. Wang, Elliptic equations on convex domains with nonhomogeneous Dirichlet boundary conditions, J. Differential Equations. 246(5)(2009), 1723-1743.

\bibitem{15}D. S. Li, K. Zhang, An optimal geometric condition on domains for boundary differentiability of solutions of elliptic equations, J. Differential Equations. 254(9)(2013), 3765-3793.

\bibitem{16}F. Y. Ma, D. Moreira, L. H. Wang, Differentiability at lateral boundary for fully nonlinear parabolic equations, J. Differential Equations. 263(5)(2017), 2672-2686.
\end{thebibliography}
\end{document}